\newtheorem{theorem}{Theorem}[section]
\newtheorem{lemma}[theorem]{Lemma}
\newcommand{\be}{\begin{equation}}
\newcommand{\ee}{\end{equation}}
\newcommand{\ba}{\begin{array}}
\newcommand{\ea}{\end{array}}
\newcommand{\bpm}{\begin{pmatrix}}
\newcommand{\epm}{\end{pmatrix}}
\newcommand{\liuhao}{\fontsize{7.0pt}{\baselineskip}\selectfont}
\definecolor{Gray}{rgb}{0.5,0.5,0.5}
\begin{document}
  \title{An Efficient Inexact Newton-CG Algorithm for the Smallest Enclosing Ball Problem of Large Dimensions
  \author{Ya-Feng Liu$^\dagger$, Rui Diao\thanks{Y.-F. Liu (corresponding author) and R. Diao are with the State Key Laboratory of Scientific and Engineering Computing, Institute of Computational Mathematics and Scientific/Engineering Computing, Academy of Mathematics and Systems Science, Chinese Academy of Sciences, Beijing 100190, China (email: yafliu@lsec.cc.ac.cn, diaorui@lsec.cc.ac.cn)},~~Feng Ye$^\ddagger$, and Hongwei Liu\thanks{F. Ye and H. Liu are with the Department of Applied Mathematics, Xidian University, Xi'an 710071, China (email: fye@xidian.edu.cn, xdliuhongwei@hotmail.com)}}
  \!\!\!\!\thanks{This work was partially supported by the National Natural Science Foundation, Grants 11331012 and 11301516.}
 }

  \maketitle

\begin{abstract}
     \boldmath
In this paper, we consider the problem of computing the smallest enclosing ball (SEB) of a set of $m$ balls in $\mathbb{R}^n,$ where the product $mn$ is large. We first approximate the non-differentiable SEB problem by its log-exponential aggregation function and then propose a computationally efficient inexact Newton-CG algorithm for the smoothing approximation problem by exploiting its special (approximate) sparsity structure. The key difference between the proposed inexact Newton-CG algorithm and the classical Newton-CG algorithm is that the gradient and the Hessian-vector product are inexactly computed in the proposed algorithm, which makes it capable of solving the large-scale SEB problem. We give an adaptive criterion of inexactly computing the gradient/Hessian and establish global convergence of the proposed algorithm. We illustrate the efficiency of the proposed algorithm by using the classical Newton-CG algorithm as well as the algorithm from [Zhou. {et al.} in Comput. Opt. \& Appl. 30, 147--160 (2005)] as benchmarks.

 %





 \end{abstract}
\noindent {\bf Keywords:}
Smallest enclosing ball, smoothing approximation, inexact gradient, inexact Newton-CG algorithm, global convergence.

\section{Introduction}\label{intro}
{The smallest enclosing ball (SEB) problem is considered in this paper. The SEB
problem is to find a ball with the smallest radius that can enclose
the union of all given balls $B_i~(i=1,2,\cdots,m)$ with center
$c_i$ and radius $r_i\geq0$ in $\mathbb{R}^n$, i.e., $$B_i=\left\{x\in
\mathbb{R}^n\,|\,\left\|x-c_i\right\|\leq r_i\right\}.$$ Define $$f(x)=\max_{1\leq
i\leq m}\left\{f_i(x)\right\},$$ where
\begin{equation*}
  f_i(x)=\|x-c_i\|+r_i,\,i=1,2,\ldots,m.
\end{equation*}
Then, the SEB problem can be
formulated as the following nonsmooth convex optimization problem \cite{16}:
\begin{equation}\label{nonsmooth}
\displaystyle \min_{x\in \mathbb{R}^n} f(x).
\end{equation} It is shown in \cite{16} that problem (\ref{nonsmooth}) has a unique solution.

The SEB problem arises in a large number of important applications,
often requiring that it should be solved in large dimensions,
such as location analysis \cite{location}, gap tolerant classifiers in machine learning
\cite{1,machinelearning,machinelearning2}, tuning support vector machine parameters
\cite{2}, support vector clustering \cite{4,support}, $k$-center clustering
\cite{6}, 
testing of radius clustering \cite{7}, pattern recognition \cite{1,pattern},~
and it is itself of interest as a
problem in computational geometry \cite{9,10,circle,11,12,13}.

Many algorithms have been proposed for the special case of problem (\ref{nonsmooth}) with all $r_i$ degenerating into
zero, i.e., the problem of the smallest enclosing ball of points. To
the best of our knowledge, if the points lie in low $n$-dimensional
space, methods \cite{14,15,8,fischer,points} from computational geometry community can
yield quite satisfactory solutions in both theory and practice.
Nevertheless, these approaches cannot handle most of very recent
applications in connection with machine learning \cite{machinelearning,machinelearning2} and support vector machines \cite{4,support}
that require the problem of higher
dimensions to be solved.
%

Obviously, the non-differentiable convex SEB problem \eqref{nonsmooth} can be solved directly by the subgradient method \cite{subgradient,subgradient2}. With an appropriate step size rule, the subgradient method is globally convergent. However, the subgradient method suffers a quite slow
convergence rate, and it is very sensitive to the choice of the initial step size. By introducing additional slack variables $r\in\mathbb{R},\left\{s_i\in\mathbb{R}^n\right\}_{i=1}^m,\left\{t_i\in\mathbb{R}\right\}_{i=1}^m,$ the SEB problem \eqref{nonsmooth} can be equivalently reformulated as a second order cone program (SOCP) as follows:
\begin{equation*}
\begin{array}{cl}
\displaystyle\min_{\left\{x,\,r,\,\left\{s_i\right\}_{i=1}^m,\,\left\{t_i\right\}_{i=1}^m\right\}} & r \\
\mbox{s.t.} & r-t_i=r_i,\\
& x-s_i=c_i,\\
&\|s_i\|\leq t_i.
\end{array}\end{equation*} As shown in \cite{16}, while the above SOCP reformulation of the SEB problem can be efficiently solved by using the standard software package like SDPT3 \cite{SDPT3} with special structures taking into account, it typically requires too much memory space to store intermediate data, which makes the approach prohibitively being used for solving the SEB problem with large dimensions.

Recently, various smooth approximation-based methods \cite{16,21,20,smooth,yubo,polak,polak2,smoothing,jorsc1,jorsc2} have been proposed for solving the SEB problem in high dimensions. For instance, the log-exponential aggregation function \cite{17,aggregate} was used in \cite{16} to smooth the maximum function, and then the limited-memory BFGS algorithm \cite{lbfgs} was presented to solve the resulting smoothing problem. In \cite{20}, the authors used the Chen-Harker-Kanzow-Smale (CHKS) function \cite{chks1,chks2,chks3} to approximate the maximum function, and again, applied the limited-memory BFGS algorithm to solve the smoothing approximation problem. 

{The goal of this paper is to develop a computationally efficient algorithm that
could be used to solve the SEB problems with large $mn$. Different from the existing literatures \cite{16,20,17,aggregate,chks1,chks2,chks3}, our emphasis is not to develop new smoothing techniques but to design efficient algorithms for solving the existing smoothing approximation problems by using their special structures.

The main contribution of this paper is as follows. We propose a computationally efficient inexact Newton-CG algorithm that can efficiently solve the SEB problems with large $mn.$ At each iteration, the proposed algorithm first applies the CG method to approximately solve the inexact Newton equation and obtain the search direction; and then a line search is performed along the obtained direction. The distinctive advantage of the proposed inexact Newton-CG algorithm over the classical Newton-CG algorithm is that the gradient and the Hessian-vector product are inexactly computed, which makes it more suitable to be used to solve the SEB problem of large dimensions. Under an appropriate choice of parameters, we establish global convergence of the proposed inexact Newton-CG algorithm. Numerical simulations show that the proposed algorithm takes substantially less CPU time to solve the SEB problems than the classical Newton-CG algorithm and the state-of-the-art algorithm in \cite{16}.

The rest of the paper is organized as follows. In section \ref{log-exp}, we briefly review the log-exponential aggregation function. In section \ref{our algorithm},
by taking into the special structure of the log-exponential aggregation function into consideration, the inexact Newton-CG algorithm is proposed for solving the SEB problem. Global convergence of the proposed algorithm is established in Section \ref{convergence}. Numerical results are reported in Section \ref{experiment} to illustrate the efficiency of the proposed algorithm}, and conclusion is
drawn in section \ref{conclusion}. %

\section{Review of Log-Exponential Aggregation Function}
\label{log-exp} For any $\mu>0,$ the smooth log-exponential
aggregation function of $f(x)$ in \eqref{nonsmooth} is defined
as\begin{equation}\label{smoothfunction}
  f(x;\mu)=\mu\ln\left(\sum_{i=1}^m\exp\left(f_i(x;\mu)/\mu\right)\right),
\end{equation}
where
\begin{equation*}
f_i(x;\mu)=\sqrt{\|x-c_i\|^2+\mu^2}+r_i,\,i=1,2,\ldots,m.
\end{equation*}
\begin{lemma}[\cite{16,21,17,aggregate}]\label{yinli}
The function $f(x;\mu)$ in \eqref{smoothfunction} has the following
properties:
\begin{enumerate}
  \item [(i)] For any $x\in \mathbb{R}^n$ and $0<\mu_1<\mu_2,$ we have
  $f(x;\mu_1)<f(x;\mu_2);$
  \item [(ii)] For any $x\in \mathbb{R}^n$ and $\mu>0,$ we have
  $f(x)<f(x;\mu)\leq f(x)+\mu\left(1+\ln{m}\right);$ 
  \item [(iii)] For any $\mu>0,$ $f(x;\mu)$ is
  continuously differentiable and strictly convex in $x\in \mathbb{R}^n$, and its gradient
  and Hessian are given as follows:
  \begin{align}
\nabla f(x;\mu)=&\sum_{i=1}^m \lambda_i(x;\mu) \nabla f_i(x;\mu),
\label{gradient}\\
\displaystyle \nabla^2
f(x;\mu)=&\displaystyle\sum_{i=1}^m\left(\lambda_i(x;\mu)\nabla^2f_i(x;\mu)+\frac{1}{\mu}\lambda_i(x;\mu)\nabla
f_i(x;\mu)\nabla f_i(x;\mu)^T\right)\label{hessian}\\
&\displaystyle-\frac{1}{\mu}\nabla f(x;\mu)\nabla
f(x;\mu)^T,\nonumber
  \end{align}
where \begin{align}
\lambda_i(x;\mu)&=\frac{\exp\left(f_i(x;\mu)/\mu\right)}{\displaystyle\sum_{j=1}^m\exp\left(f_j(x;\mu)/\mu\right)}\in
(0,\,1),\,i=1,2,\ldots,m,\label{lambda}\\
\nabla f_i(x;\mu)&=\dfrac{x-c_i}{{g_i(x;\mu)}},\,i=1,2,\ldots,m,
\label{ggi}\\
\nabla^2f_i(x;\mu)&=\displaystyle\frac{I_n}{{g_i(x;\mu)}}-\dfrac{(x-c_i)(x-c_i)^T}{g_i(x;\mu)^{3}},\,i=1,2,\ldots,m,\label{hi}\\
g_i(x;\mu)&=\sqrt{\|x-c_i\|^2+\mu^2},\,i=1,2,\ldots,m,\label{gi}
\end{align}
and $I_n$ denotes the $n\times n$ identity matrix.
\end{enumerate}
\end{lemma}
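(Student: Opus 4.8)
The plan is to treat the three parts separately, since each rests on elementary facts: for (ii), the standard sandwich bounds for the log-sum-exp function; for (i), a monotonicity property of the temperature-scaled log-sum-exp together with the monotonicity of $g_i(x;\cdot)$; and for (iii), the chain and quotient rules followed by a positive-definiteness argument. Starting with (ii): for any reals $a_1,\dots,a_m$ and any $\mu>0$, bounding $\sum_i e^{a_i/\mu}$ below by its largest term and above by $m$ times its largest term gives $\max_i a_i\le\mu\ln\big(\sum_i e^{a_i/\mu}\big)\le\max_i a_i+\mu\ln m$; with $a_i=f_i(x;\mu)$ this yields $\max_i f_i(x;\mu)\le f(x;\mu)\le\max_i f_i(x;\mu)+\mu\ln m$. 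On the other hand, from $\|x-c_i\|<\sqrt{\|x-c_i\|^2+\mu^2}\le\|x-c_i\|+\mu$ (the left inequality strict because $\mu>0$, the right one the inequality $\sqrt{a^2+b^2}\le a+b$ for $a,b\ge0$) we get $f_i(x)<f_i(x;\mu)\le f_i(x)+\mu$ for each $i$, and hence $f(x)<\max_i f_i(x;\mu)\le f(x)+\mu$. Chaining the two displays gives $f(x)<f(x;\mu)\le f(x)+\mu(1+\ln m)$, which is (ii).

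For (i), fix $0<\mu_1<\mu_2$ and $x\in\mathbb{R}^n$, and put $a_i:=g_i(x;\mu_2)+r_i$. Since $\partial g_i(x;\mu)/\partial\mu=\mu/g_i(x;\mu)>0$, each $g_i(x;\cdot)$ is strictly increasing, so $g_i(x;\mu_1)+r_i<a_i$; as $\exp$ and $\ln$ are increasing and $\mu_1>0$, this already gives the \emph{strict} inequality $f(x;\mu_1)<\mu_1\ln\big(\sum_i e^{a_i/\mu_1}\big)$. It then suffices to show that for \emph{fixed} reals $a_i$ the map $G(\mu):=\mu\ln\big(\sum_i e^{a_i/\mu}\big)$ is nondecreasing on $(0,\infty)$, since that yields $\mu_1\ln\big(\sum_i e^{a_i/\mu_1}\big)\le\mu_2\ln\big(\sum_i e^{a_i/\mu_2}\big)=f(x;\mu_2)$. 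Differentiating and writing $w_i(\mu)=e^{a_i/\mu}/\sum_j e^{a_j/\mu}$, one finds $G'(\mu)=\ln\big(\sum_i e^{a_i/\mu}\big)-\sum_i w_i(\mu)\,a_i/\mu=-\sum_i w_i(\mu)\ln w_i(\mu)\ge0$, using the identity $\ln w_i=a_i/\mu-\ln\sum_j e^{a_j/\mu}$ and nonnegativity of the (Shannon) entropy of the weights $w_i(\mu)$. This monotonicity of the temperature-scaled log-sum-exp is the step I expect to be the least routine; the rest of the lemma is essentially bookkeeping.

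For (iii), continuous differentiability is immediate because $g_i(x;\mu)\ge\mu>0$, so each $g_i(\cdot;\mu)$, and hence $f_i(\cdot;\mu)$ and $\exp(f_i(\cdot;\mu)/\mu)$, is $C^\infty$ and the outer $\ln$ acts on a strictly positive smooth function. The formulas are obtained by differentiation: $\nabla g_i(x;\mu)=(x-c_i)/g_i(x;\mu)$ and $\nabla^2 g_i(x;\mu)=I_n/g_i(x;\mu)-(x-c_i)(x-c_i)^T/g_i(x;\mu)^3$ give \eqref{ggi} and \eqref{hi}; applying the chain rule to $f(x;\mu)=\mu\ln\big(\sum_i e^{f_i(x;\mu)/\mu}\big)$ gives \eqref{gradient} with $\lambda_i$ as in \eqref{lambda}; and differentiating $\nabla f=\sum_i\lambda_i\nabla f_i$ together with the quotient-rule identity $\nabla\lambda_i=\tfrac1\mu\lambda_i(\nabla f_i-\nabla f)$ (which uses $\sum_j\lambda_j\nabla f_j=\nabla f$) yields \eqref{hessian}.

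It remains to establish strict convexity, for which I would show that $\nabla^2 f(x;\mu)$ is positive definite for every $x\in\mathbb{R}^n$. Writing $\nabla^2 f_i(x;\mu)=\tfrac1{g_i(x;\mu)}\big(I_n-(x-c_i)(x-c_i)^T/g_i(x;\mu)^2\big)$, one has for every unit vector $v$ that $v^T\nabla^2 f_i(x;\mu)v\ge\mu^2/g_i(x;\mu)^3>0$, because $(v^T(x-c_i))^2\le\|x-c_i\|^2<g_i(x;\mu)^2$; hence $\sum_i\lambda_i\nabla^2 f_i(x;\mu)$ is positive definite, since the $\lambda_i$ are positive. Meanwhile $\tfrac1\mu\big(\sum_i\lambda_i\nabla f_i\nabla f_i^T-\nabla f\nabla f^T\big)$ equals $\tfrac1\mu$ times the covariance matrix of $\nabla f_i(x;\mu)$ under the probability vector $(\lambda_1,\dots,\lambda_m)$, and is therefore positive semidefinite. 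Adding the two contributions in \eqref{hessian} shows that $\nabla^2 f(x;\mu)$ is positive definite on all of $\mathbb{R}^n$, which gives strict convexity of $f(\cdot;\mu)$ and completes the proof.
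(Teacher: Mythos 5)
The paper does not prove this lemma: it imports it wholesale from the cited references (Zhou et al., Xu, Li, Li--Fang), so there is no in-paper proof to compare yours against. Judged on its own terms, your argument is correct and self-contained. Part (ii) via the sandwich $\max_i a_i\le\mu\ln\sum_i e^{a_i/\mu}\le\max_i a_i+\mu\ln m$ combined with $f_i(x)<f_i(x;\mu)\le f_i(x)+\mu$ is exactly the standard route. The one place where care is genuinely needed is part (i), because $\mu$ enters twice --- inside each $f_i(x;\mu)$ through $g_i$ and as the aggregation temperature --- and you handle both effects cleanly: strict monotonicity of $g_i(x;\cdot)$ gives the strict inequality $f(x;\mu_1)<\mu_1\ln\bigl(\sum_i e^{a_i/\mu_1}\bigr)$ with $a_i=f_i(x;\mu_2)$ frozen, and the entropy identity $G'(\mu)=-\sum_i w_i\ln w_i\ge 0$ disposes of the temperature dependence. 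For (iii), the gradient/Hessian formulas follow from the chain rule and the identity $\nabla\lambda_i=\tfrac1\mu\lambda_i(\nabla f_i-\nabla f)$ as you say, and your positive-definiteness argument (each $\nabla^2 f_i$ has smallest eigenvalue $\mu^2/g_i(x;\mu)^3>0$, and the remaining terms form $\tfrac1\mu$ times a covariance matrix, hence are positive semidefinite) is the right way to get strict convexity; it is also consistent with the eigenvalue computation the paper later uses in the proof of Theorem \ref{errorth}. No gaps.
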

%
It can be easily seen from \eqref{lambda}, \eqref{ggi}, and \eqref{gi} that, for any $\mu>0,$
\begin{align}
  \sum_{i=1}^m\lambda_{i}(x;\mu)=&~1,\label{1sum}\\
  \left\|\nabla f_i(x;\mu)\right\|<&~1,~i=1,2,\ldots,m.\label{1gradient}
\end{align}Combining \eqref{gradient}, \eqref{1sum}, and \eqref{1gradient}, we further obtain
\begin{equation}\label{1gra}
  \left\|\nabla
    f(x;\mu)\right\|<1.
\end{equation}

The algorithm proposed in \cite{16} is based on the log-exponential aggregation function \eqref{smoothfunction}. We rewrite it as Algorithm \ref{alg1} in this paper as follows.
\begin{algorithm}\caption{}\label{alg1}
\begin{algorithmic}[1]
\STATE Let $\sigma\in(0,1),~\epsilon_1,~\epsilon_2\geq 0,~x_0\in
\mathbb{R}^n,$ and $\mu_0>0$ be given, and set $k=0.$
\REPEAT
\STATE  Use the limited-memory BFGS algorithm \cite{lbfgs} to solve problem 
  \begin{equation}\label{sub}
  \min_{x\in\mathbb{R}^n} f(x;\mu_k),
  \end{equation}
 and obtain $x_k$ such that $\left\|\nabla f(x_k;\mu_k)\right\|\leq \epsilon_2.$
\STATE Set $\mu_{k+1}=\sigma\mu_k$ and $k=k+1.$
\UNTIL {$\mu_k\leq\epsilon_1$}
  \end{algorithmic}
\end{algorithm}

\begin{theorem}(\label{convergenceZhou}\cite{16}) Let $\epsilon_1=\epsilon_2=0$ in Algorithm \ref{alg1}.
Suppose that $\{x_k\}_{k\geq1}$ be the sequence generated by
Algorithm \ref{alg1} and $x^*$ be the unique solution to problem
\eqref{nonsmooth}. Then $$\lim_{k\rightarrow+\infty}x_k=x^*.$$
\end{theorem}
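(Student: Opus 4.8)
The plan is to exploit the uniform approximation bound in Lemma~\ref{yinli}(ii), the coercivity of $f$, and the uniqueness of $x^*$, and then close with a standard subsequence argument.

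First I would record what the exact solve in Algorithm~\ref{alg1} provides: with $\epsilon_2=0$ the output satisfies $\nabla f(x_k;\mu_k)=0$, and since $f(\cdot;\mu_k)$ is strictly convex by Lemma~\ref{yinli}(iii) and coercive — it dominates the coercive function $f$ by part (ii), so it attains its infimum — the point $x_k$ is the \emph{unique} global minimizer of $f(\cdot;\mu_k)$. Hence $f(x_k;\mu_k)\le f(x^*;\mu_k)$, and chaining this with Lemma~\ref{yinli}(ii) applied at both $x_k$ and $x^*$ produces the key sandwich
\[
f(x^*)\;\le\;f(x_k)\;<\;f(x_k;\mu_k)\;\le\;f(x^*;\mu_k)\;\le\;f(x^*)+\mu_k\left(1+\ln m\right).
\]
Since $\mu_{k+1}=\sigma\mu_k$ with $\sigma\in(0,1)$ forces $\mu_k\downarrow 0$, this immediately yields $f(x_k)\to f(x^*)$.

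The next step is to show that $\{x_k\}$ is bounded, and this is the point I expect to need the most care. Because $\mu_k\le\mu_0$ for all $k$, the sandwich above gives $f(x_k)<f(x^*)+\mu_0\left(1+\ln m\right)=:\gamma$ uniformly in $k$, so the whole sequence lies in the single level set $\{x\in\mathbb{R}^n : f(x)\le\gamma\}$; coercivity of $f$ (for instance $f(x)\ge\|x-c_1\|\to\infty$ as $\|x\|\to\infty$) makes this set bounded and closed, hence $\{x_k\}$ is bounded. I would stress that without the uniform bound $\mu_k\le\mu_0$ one could not confine all iterates to a common compact set, and the scalar convergence $f(x_k)\to f(x^*)$ alone would not pin down $\lim_k x_k$.

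Finally I would run the subsequence argument: given any convergent subsequence $x_{k_j}\to\bar x$, continuity of $f$ together with $f(x_k)\to f(x^*)$ gives $f(\bar x)=f(x^*)$, so $\bar x$ solves \eqref{nonsmooth}; uniqueness of the minimizer then forces $\bar x=x^*$. Since every convergent subsequence of the bounded sequence $\{x_k\}$ has limit $x^*$, we conclude $x_k\to x^*$, which completes the proof. Apart from the boundedness step, every ingredient is routine, relying only on the properties of the log-exponential aggregation already catalogued in Lemma~\ref{yinli}.
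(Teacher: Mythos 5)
Your proof is correct. Note that the paper itself gives no proof of this theorem --- it is simply quoted from \cite{16} --- so there is nothing internal to compare against; your argument (the sandwich $f(x^*)\le f(x_k)<f(x_k;\mu_k)\le f(x^*;\mu_k)\le f(x^*)+\mu_k(1+\ln m)$ from Lemma~\ref{yinli}(ii) and the exact minimization at each $\mu_k$, followed by coercivity to confine $\{x_k\}$ to the level set $\{x : f(x)\le f(x^*)+\mu_0(1+\ln m)\}$ and a subsequence/uniqueness argument) is the standard route taken in \cite{16} and is complete. The one step worth flagging is the identification of $x_k$ with the global minimizer of $f(\cdot;\mu_k)$: you justify this correctly by observing that $\nabla f(x_k;\mu_k)=0$ together with strict convexity and coercivity of $f(\cdot;\mu_k)$ (inherited from $f(x;\mu)>f(x)\ge\|x-c_1\|+r_1$) pins down the unique minimizer, so no gap remains.
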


In the next section, we shall exploit the special (approximate) sparsity property of the log-exponential aggregation function $f(x;\mu)$ and propose an inexact Newton-CG algorithm for solving the smoothing approximation problem \eqref{sub} and thus the SEB problem \eqref{nonsmooth}. 


%

\section{Inexact Newton-CG Algorithm} \label{our algorithm}
As can be seen from \eqref{gradient} and \eqref{hessian}, the gradient $\nabla f(x;\mu)$ and Hessian $\nabla^2 f(x;\mu)$ of $f(x;\mu)$ are (convex) combinations of $\nabla f_i(x;\mu)$ and $\nabla^2 f_i(x;\mu)~(i=1,2,\ldots,m)$ with the vector $$\lambda(x;\mu)=\left(\lambda_1(x;\mu),\lambda_2(x;\mu),\ldots,\lambda_m(x;\mu)\right)^T$$ being the combination coefficients. As the parameter $\mu$ gets smaller, a large number of $\lambda_i(x;\mu)~(i=1,2,\ldots,m)$ become close to zero and thus are neglectable. To see this clearly, we define
\begin{equation*}\label{fxmu}
f_{\infty}(x;\mu)=\max_{1\leq i\leq m} \left\{f_i(x;\mu)\right\}.
\end{equation*} 
Since
$$1<\displaystyle\sum_{i=1}^m\exp\left(\left(f_i(x;\mu)-f_{\infty}(x;\mu)\right)/\mu\right)\leq m$$ and
\begin{align*}
    \lambda_i(x;\mu)=\frac{\displaystyle\exp\left(f_i(x;\mu)/\mu\right)}{\displaystyle\sum_{j=1}^m\exp\left(f_j(x;\mu)/\mu\right)}=\frac{\displaystyle\exp\left(\left(f_i(x;\mu)-f_{\infty}(x;\mu)\right)/\mu\right)}{\displaystyle\sum_{j=1}^m\exp\left(\left(f_j(x;\mu)-f_{\infty}(x;\mu)\right)/\mu\right)},
\end{align*}it follows that \begin{equation}\label{sparse}
  \displaystyle\frac{\exp\left(\left({f_i(x;\mu)-f_{\infty}(x;\mu)}\right)/{\mu}\right)}{m}\leq\lambda_i(x;\mu)<\displaystyle\exp\left(\left({f_i(x;\mu)-f_{\infty}(x;\mu)}\right)/{\mu}\right),~i=1,2,\ldots,m.
\end{equation} The second inequality of \eqref{sparse} shows that if $\mu$ is sufficiently small or $f_i(x;\mu)$ is much smaller than $f_{\infty}(x;\mu)$, then $\lambda_i(x;\mu)$ is approximately equal to zero, and $f_i(x;\mu)$ has little contribution to $f(x;\mu)$ in \eqref{smoothfunction}.
%
%


Motivated by the above observation of the (approximate) sparsity of the vector $\lambda(x;\mu)$, we propose to compute $\nabla f(x;\mu)$ and $\nabla^2 f(x;\mu)$ in an inexact way by judiciously neglecting some terms associated with very small $\lambda_i(x;\mu).$ In such a way, the computational cost is significantly reduced (compared to compute $\nabla f(x;\mu)$ and $\nabla^2 f(x;\mu)$ exactly). Then, we propose an inexact Newton-CG algorithm to solve the smoothing approximation problem \eqref{sub}. The search direction in the inexact Newton-CG algorithm is computed by applying the CG method to solve the inexact Newton equation in an inexact fashion. 
%

\subsection{An adaptive criterion and error analysis}\label{sectruncated}
%
In this subsection, we give an adaptive criterion of inexactly computing the gradient/Hessian and analyze the errors between the inexact gradient/Hessian and the true ones. For any given $\epsilon_3\in(0,1],\;\mu\in(0,1],$ define \begin{equation}\label{set}
{S(x;\mu,\epsilon)}=\left\{\,i\,|\,\lambda_i(x;\mu)\geq\epsilon\right\}
\end{equation}
with \begin{equation}\label{epsilon}
  \epsilon=\frac{\mu\epsilon_3}{10m}.
\end{equation} It is simple to see $S(x;\mu,\epsilon)\neq \emptyset.$ Otherwise, suppose $S(x;\mu,\epsilon)=\emptyset.$ Then it follows from \eqref{epsilon} and the facts $\epsilon_3\leq 1$ and $\mu\leq 1$ that
$$\sum_{i=1}^m\lambda_i(x;\mu)<\sum_{i=1}^m\epsilon=\frac{\mu\epsilon_3}{10}<1,$$ which contradicts \eqref{1sum}. 
%
%
Hence, it makes sense to define \begin{align}
\tilde f(x;\mu)=&~\mu\ln\left(\sum_{i\in
{S(x;\mu,\epsilon)}}\exp\left(
f_i(x;\mu)/\mu\right)\right),\label{tildef} \\
\nabla\tilde f(x;\mu)=&\displaystyle\sum_{i\in
{S(x;\mu,\epsilon)}}\tilde\lambda_i(x;\mu)\nabla
f_i(x;\mu),\label{tildeg}\\
 \displaystyle
\nabla^2 \tilde f(x;\mu)=&\displaystyle\sum_{i\in
S(x;\mu,\epsilon)}\left(\tilde\lambda_i(x;\mu)\nabla^2f_i(x;\mu)+\frac{1}{\mu}\tilde\lambda_i(x;\mu)\nabla
f_i(x;\mu)\nabla f_i(x;\mu)^T\right)\label{tildehessian}\\
&\displaystyle-\frac{1}{\mu}\nabla\tilde f(x;\mu)\nabla\tilde
f(x;\mu)^T,\nonumber
\end{align}
where \begin{equation*}\label{tildel} \tilde
\lambda_i(x;\mu)=\frac{\exp\left(f_i(x;\mu)/\mu\right)}{\displaystyle\sum_{j\in
{S(x;\mu,\epsilon)}}\exp\left(f_j(x;\mu)/\mu\right)}\in
(0,\,1),\,i\in {S(x;\mu,\epsilon)}.
\end{equation*}

According to (ii) of Lemma \ref{yinli}, we have\begin{equation}\label{pointappro} f(x)<\tilde f(x;\mu)\leq
f(x;\mu)\leq f(x)+\mu\left(1+\ln{m}\right),
\end{equation}where the second inequality holds with ``=" if and only if
${S(x;\mu,\epsilon)}$ defined in \eqref{set} equals the set
$\left\{1,2,\ldots,m\right\}.$ Inequality \eqref{pointappro} gives a nice explanation of $\tilde f(x;\mu)$ defined in \eqref{tildef}. For any given $\mu\in(0,1],$ (ii) of Lemma \ref{yinli} shows $f(x;\mu)$ is a uniform approximation to $f(x)$, while $\tilde f(x;\mu)$ could be explained as a ``better'' point-wise approximation to $f(x)$ (compared to $f(x;\mu)$).
%

%
%
%

The error estimations associated with \eqref{set} is given in the following theorem.
\begin{theorem}\label{errorth}Given $\epsilon_3\in(0,1],\;\mu\in(0,1],$ let $\epsilon,$ $S(x;\mu,\epsilon),$ $\tilde f(x;\mu),$ $\nabla \tilde
f(x;\mu),$ and $\nabla^2\tilde
f(x;\mu)$ be defined as in \eqref{epsilon}, \eqref{set},~\eqref{tildef},~\eqref{tildeg},~and~\eqref{tildehessian}, respectively.
Then, there hold \begin{align}
f(x;\mu)-\tilde f(x;\mu)\leq&~\mu^2\epsilon_3/9,\label{ferror}\\
\left\|\nabla f(x;\mu)-\nabla \tilde
f(x;\mu)\right\|\leq&~\mu\epsilon_3/5,\label{gerror}\\
\left\|\nabla^2f(x;\mu)-\nabla^2\tilde
f(x;\mu)\right\|\leq&~4\epsilon_3/5.\label{herror}
\end{align}
\end{theorem}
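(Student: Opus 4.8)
The plan is to bound each of the three quantities by controlling the total weight $\delta := \sum_{i \notin S(x;\mu,\epsilon)} \lambda_i(x;\mu)$ of the neglected indices, and then propagating this single scalar bound through the formulas for $f$, $\nabla f$, and $\nabla^2 f$. First I would observe that since there are at most $m$ neglected indices and each has $\lambda_i(x;\mu) < \epsilon = \mu\epsilon_3/(10m)$, we get the crude but sufficient bound $\delta < m\epsilon = \mu\epsilon_3/10$. It will be convenient to also record $\sum_{i \in S}\lambda_i = 1 - \delta$ and to relate the "true" coefficients $\lambda_i$ to the "restricted" coefficients $\tilde\lambda_i$: from the definitions, $\tilde\lambda_i = \lambda_i / (1-\delta)$ for $i \in S(x;\mu,\epsilon)$.

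For the function-value bound \eqref{ferror}: from \eqref{smoothfunction} and \eqref{tildef}, $f(x;\mu) - \tilde f(x;\mu) = \mu \ln\!\big(\sum_j \exp(f_j/\mu) \,/\, \sum_{i\in S}\exp(f_i/\mu)\big) = -\mu\ln(1-\delta)$. Using $-\ln(1-\delta) \le \delta/(1-\delta)$ (or the simpler $-\ln(1-\delta)\le 2\delta$ for small $\delta$) together with $\delta < \mu\epsilon_3/10 \le 1/10$ gives $f(x;\mu)-\tilde f(x;\mu) \le \mu \cdot \tfrac{10}{9}\delta < \mu \cdot \tfrac{10}{9}\cdot\tfrac{\mu\epsilon_3}{10} = \mu^2\epsilon_3/9$, which is exactly \eqref{ferror}. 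For the gradient bound \eqref{gerror}: write $\nabla f - \nabla\tilde f = \sum_{i\in S}(\lambda_i - \tilde\lambda_i)\nabla f_i + \sum_{i\notin S}\lambda_i \nabla f_i$. Since $\tilde\lambda_i - \lambda_i = \lambda_i\big(\tfrac{1}{1-\delta}-1\big) = \lambda_i\tfrac{\delta}{1-\delta} \ge 0$, the first sum has norm at most $\sum_{i\in S}(\tilde\lambda_i-\lambda_i) = \delta$ (using $\|\nabla f_i\|<1$ from \eqref{1gradient}), and the second sum has norm at most $\sum_{i\notin S}\lambda_i = \delta$; hence $\|\nabla f - \nabla\tilde f\| \le 2\delta < 2\mu\epsilon_3/10 = \mu\epsilon_3/5$, giving \eqref{gerror}.

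The Hessian bound \eqref{herror} is the main obstacle, since \eqref{hessian} has three structurally different groups of terms. I would split the difference $\nabla^2 f - \nabla^2\tilde f$ accordingly: (a) the curvature terms $\sum \lambda_i \nabla^2 f_i$, where from \eqref{hi} one has $\|\nabla^2 f_i(x;\mu)\| \le 1/g_i(x;\mu) \le 1/\mu$ (since $g_i \ge \mu$), so the difference of these sums is bounded in norm by $\tfrac{1}{\mu}\big(\sum_{i\in S}|\tilde\lambda_i-\lambda_i| + \sum_{i\notin S}\lambda_i\big) \le \tfrac{2\delta}{\mu}$; (b) the rank-one terms $\tfrac{1}{\mu}\sum\lambda_i \nabla f_i\nabla f_i^T$, where $\|\nabla f_i\nabla f_i^T\| = \|\nabla f_i\|^2 < 1$, giving a difference bounded by $\tfrac{1}{\mu}\cdot 2\delta$ again; (c) the term $-\tfrac{1}{\mu}\nabla f\nabla f^T$ versus $-\tfrac{1}{\mu}\nabla\tilde f\nabla\tilde f^T$, where I'd use $\|\nabla f\nabla f^T - \nabla\tilde f\nabla\tilde f^T\| \le \|\nabla f - \nabla\tilde f\|\,(\|\nabla f\| + \|\nabla\tilde f\|) \le 2\|\nabla f-\nabla\tilde f\| \le 4\delta$ (invoking \eqref{1gra}, the analogous bound $\|\nabla\tilde f\|<1$, and \eqref{gerror} as just proved), so this contributes at most $\tfrac{4\delta}{\mu}$. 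Adding (a), (b), (c) yields $\|\nabla^2 f - \nabla^2\tilde f\| \le \tfrac{8\delta}{\mu} < \tfrac{8}{\mu}\cdot\tfrac{\mu\epsilon_3}{10} = \tfrac{4\epsilon_3}{5}$, which is \eqref{herror}. The delicate point to get right is tracking the constant $10$ in $\epsilon$ through the three bounds — it is chosen precisely so that $8\delta/\mu \le 4\epsilon_3/5$; I would double-check the coefficient bookkeeping in step (c), since that is where the triangle-inequality slack ($4\delta$ rather than $2\delta$) is largest and where a looser estimate would break the stated constant.
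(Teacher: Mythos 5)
Your proposal is correct and follows essentially the same route as the paper's proof: the same splitting of the neglected mass $\sum_{i\notin S}\lambda_i\le m\epsilon$, the same $\ln(1+x)\le x$ bound for \eqref{ferror}, the same two-sum decomposition for \eqref{gerror}, and the same three-term (A/B/C) decomposition with $\|\nabla^2 f_i\|\le 1/\mu$ and the rank-one difference bound for \eqref{herror}, with identical constant bookkeeping $8m\epsilon/\mu=4\epsilon_3/5$.
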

\begin{proof}
We first prove \eqref{ferror} holds true. It follows from \eqref{1sum} and \eqref{set} that
\begin{equation}\label{lambdabound}\displaystyle\sum_{i\notin {S(x;\mu,\epsilon)}}\lambda_i(x;\mu)\leq m\epsilon,~
\displaystyle\sum_{i\in {S(x;\mu,\epsilon)}}\lambda_i(x;\mu)\geq
1-m\epsilon.\end{equation}
Recalling the definitions of $f(x;\mu)$ and $\tilde
    f(x;\mu)$ (cf. \eqref{smoothfunction} and \eqref{tildef}), we obtain
\begin{align*}
    f(x;\mu)-\tilde
    f(x;\mu)~&=~\mu\ln\left(\displaystyle\sum_{i=1}^m\exp\left(f_i(x;\mu)/\mu\right)\right)-\mu\ln\left(\displaystyle\sum_{i\in {S(x;\mu,\epsilon)}}\exp\left(
f_i(x;\mu)/\mu\right)\right)\\
&\overset{(a)}{\leq}~\mu\dfrac{\displaystyle\sum_{i\notin
{S(x;\mu,\epsilon)}}\exp\left(
f_i(x;\mu)/\mu\right)}{\displaystyle\sum_{i\in
{S(x;\mu,\epsilon)}}\exp\left(
f_i(x;\mu)/\mu\right)}=~\mu\dfrac{\displaystyle\sum_{i\notin
{S(x;\mu,\epsilon)}}\lambda_i(x;\mu)}{\displaystyle\sum_{i\in
{S(x;\mu,\epsilon)}}\lambda_i(x;\mu)}\\
&~\leq~\mu\dfrac{m\epsilon}{1-m\epsilon}\leq\frac{\mu^2\epsilon_3}{9},~(\text{from}~\eqref{lambdabound}~\text{and}~\text{\eqref{epsilon}})
\end{align*}where $(a)$ comes from the fact that $\ln(1+x)\leq x$ for any $x\geq 0.$
%
%

Now we prove \eqref{gerror} holds true. Since
\begin{equation}\label{lambdab2}
    \displaystyle\sum_{i\in {S(x;\mu,\epsilon)}}\tilde\lambda_i(x;\mu)-\sum_{i\in
    {S(x;\mu,\epsilon)}}\lambda_i(x;\mu)
    =1-\displaystyle\sum_{i\in
    {S(x;\mu,\epsilon)}}\lambda_i(x;\mu)
    =\displaystyle\sum_{i\notin
    {S(x;\mu,\epsilon)}}\lambda_i(x;\mu)\geq0,
\end{equation}
and $$\tilde \lambda_i(x;\mu)>\lambda_i(x;\mu),~\forall~i\in S(x;\mu,\epsilon),$$
it follows from \eqref{gradient} and \eqref{tildeg} that
\begin{align}\label{in2}
  &~~~\left\|\nabla f(x;\mu)-\nabla\tilde f(x;\mu)\right\|\\\nonumber
  &=\left\|\displaystyle\sum_{i\in {S(x;\mu,\epsilon)}}\left(\lambda_i(x;\mu)-\tilde\lambda_i(x;\mu)\right)\nabla f_i(x;\mu)+\displaystyle\sum_{i\notin {S(x;\mu,\epsilon)}}\lambda_i(x;\mu)\nabla
  f_i(x;\mu)\right\|\\\nonumber
  &\leq\displaystyle\sum_{i\in
  {S(x;\mu,\epsilon)}}\left(\tilde\lambda_i(x;\mu)-\lambda_i(x;\mu)\right)\left\|\nabla
  f_i(x;\mu)\right\|+\displaystyle\sum_{i\notin
  {S(x;\mu,\epsilon)}}\lambda_i(x;\mu)\left\|\nabla
  f_i(x;\mu)\right\|\\\nonumber
  &\leq2\displaystyle\sum_{i\notin
    {S(x;\mu,\epsilon)}}\lambda_i(x;\mu)~(\text{from}~\eqref{1gradient}~\text{and}~\eqref{lambdab2})\\\nonumber
    &\leq 2m\epsilon=\mu\epsilon_3/5.~(\text{from}~\eqref{lambdabound}~\text{and}~\text{\eqref{epsilon}})
\end{align}

Finally, we show \eqref{herror} is also true. Combining \eqref{hessian} and \eqref{tildehessian} yields
\begin{equation}\label{term}
  \begin{array}{rl}
&~~~\left\|\nabla^2f(x;\mu)-\nabla^2\tilde
f(x;\mu)\right\|\\[8pt]
&\leq \underbrace{\left\|\displaystyle\sum_{i=1}^m\lambda_i(x;\mu)\nabla^2f_i(x;\mu)-\displaystyle\sum_{i\in
{S(x;\mu,\epsilon)}}\tilde\lambda_i(x;\mu)\nabla^2f_i(x;\mu)\right\|}_{\text{Term
A}}\\
&+\displaystyle\frac{1}{\mu}\underbrace{\left\|\displaystyle\sum_{i=1}^m\lambda_i(x;\mu)\nabla
f_i(x;\mu)\nabla f_i(x;\mu)^T-\displaystyle\sum_{i\in
{S(x;\mu,\epsilon)}}\tilde\lambda_i(x;\mu)\nabla f_i(x;\mu)\nabla
f_i(x;\mu)^T\right\|}_{\text{Term B}}\\[15pt]
&+\displaystyle\frac{1}{\mu}\underbrace{\left\|\nabla
f(x;\mu)\nabla f(x;\mu)^T-\nabla\tilde
f(x;\mu)\nabla\tilde f(x;\mu)^T \right\|}_{\text{Term C}}.
  \end{array}
\end{equation}
Noticing that all eigenvalues of
$\displaystyle\nabla^2f_i(x;\mu)$ (cf. \eqref{hi}) are
$$\rho_1=\rho_2=\cdots=\rho_{n-1}=\dfrac{1}{g_i(x;\mu)},~\rho_n=\dfrac{\mu^2}{g_i(x;\mu)^{3}},$$
%
%
it follows from \eqref{gi} that \begin{equation*}\label{hbound}
\left\|\nabla^2f_i(x;\mu)\right\|\leq \frac{1}{\mu},~
i=1,2,\ldots,m.
\end{equation*}
The same argument as in \eqref{in2} shows
\begin{equation}\label{terma}\text{Term
A}\leq \dfrac{2m\epsilon}{\mu}, 
~\text{Term B}\leq {2m\epsilon}.\end{equation} 
Combining \eqref{1gra}, \eqref{in2}, and the fact $\left\|\nabla\tilde
    f(x;\mu)\right\|\leq1,$ we have
\begin{equation}\label{termc}
  \begin{array}{rcl}
    \text{Term C}&\leq& \left\|\nabla
    f(x;\mu)\left(\nabla f(x;\mu)-\nabla\tilde f(x;\mu)\right)^T+\left(\nabla f(x;\mu)-\nabla\tilde f(x;\mu)\right)\nabla\tilde
    f(x;\mu)^T\right\|\\[10pt]
    &\leq&\displaystyle \left\|\nabla
    f(x;\mu)\right\|\left\|\nabla f(x;\mu)-\nabla\tilde
    f(x;\mu)\right\|+\left\|\nabla f(x;\mu)-\nabla\tilde f(x;\mu)\right\|\left\|\nabla\tilde
    f(x;\mu)\right\|\\[10pt]
    &\leq&2\left\|\nabla f(x;\mu)-\nabla\tilde
    f(x;\mu)\right\|\leq{4m\epsilon}.
  \end{array}
\end{equation}Now we can use \eqref{term}, \eqref{terma}, and \eqref{termc} to conclude
    \begin{equation*}\label{in3}
\left\|\nabla^2f(x;\mu)-\nabla^2\tilde f(x;\mu)\right\|\leq
\dfrac{8m\epsilon}{\mu}=\frac{4\epsilon_3}{5}.
    \end{equation*} This completes the proof of Theorem \ref{errorth}.
\end{proof}

\subsection{Solving inexact Newton equation}
In the classical (line search) Newton-CG algorithm \cite{program,Yuan}, the
search direction is computed by applying the CG method to the Newton
equation
\begin{equation}\label{newton}
\nabla^2f(x;\mu)d=-\nabla f(x;\mu),
\end{equation}
until a direction $d$ is found to satisfy
\begin{equation*}\label{termination}
\left\|\nabla^2f(x;\mu)d+\nabla f(x;\mu)\right\|\leq
\eta(x;\mu)\left\|\nabla f(x;\mu)\right\|,
\end{equation*}
where $\eta(x;\mu)$ controls the solution accuracy. For instance, $\eta(x;\mu)$ can be chosen to be $$\min\left\{0.5,\,\sqrt{\left\|\nabla f(x;\mu)\right\|}\right\}.$$
A drawback of the classical Newton-CG algorithm when applied to solve the SEB problem with large $m$ and $n$ is that it is computationally expensive to obtain the Hessian and the Hessian-vector product.


 Fortunately, Theorem \ref{errorth} shows that $\nabla^2 \tilde f(x;\mu)$ and $\nabla \tilde f(x;\mu)$ are good approximations to $\nabla^2 f(x;\mu)$ and $\nabla f(x;\mu)$, respectively. Therefore, it is reasonable to replace the (exact) Newton equation \eqref{newton} with the inexact Newton equation
\begin{equation}\label{newtonapp}
\nabla^2\tilde f(x;\mu) \tilde d=-\nabla \tilde f(x;\mu).
\end{equation} Using the similar idea as in the classical Newton-CG algorithm, we do not solve \eqref{newtonapp} exactly but attempt to find a direction $\tilde d$ satisfying
\begin{equation}\label{termination}
\left\|\nabla^2\tilde f(x;\mu)\tilde d+\nabla \tilde f(x;\mu)\right\|\leq
\tilde\eta(x;\mu) \left\|\nabla \tilde f(x;\mu)\right\|,
\end{equation}
where $\tilde\eta(x;\mu)$ controls the solution accuracy. For instance, we can set $\tilde \eta(x;\mu)$ to be $$\tilde \eta(x;\mu)=\min\left\{0.5,\,\sqrt{\left\|\nabla \tilde f(x;\mu)\right\|}\right\}.$$

We apply the CG method to inexactly solve the linear equation \eqref{newtonapp} to obtain a search direction $\tilde d$ satisfying \eqref{termination}. The reasons for choosing the CG method for solving \eqref{newtonapp} are as follows. First, the matrix $\nabla^2\tilde f(x;\mu)$ is positive definite, which can be shown in the same way as in (iii) of Lemma \ref{yinli}, and the CG method is one of the most useful techniques for solving linear systems with positive definite coefficient matrices \cite{Yuan}. Second, in the inner CG iteration,
the Hessian-vector product $\nabla^2\tilde f(x;\mu)\tilde d$ is only required but not the Hessian $\nabla^2\tilde f(x;\mu)$ itself. This property makes the CG method particularly amenable to solve the linear equation \eqref{newtonapp}. Specifically, due to the special structure of
$\nabla^2\tilde f(x;\mu)$, the product $\nabla^2\tilde f(x;\mu)\tilde d$ can be obtained very fast for any given $\tilde d$. From
\eqref{ggi}, \eqref{hi}, \eqref{tildeg}, and \eqref{tildehessian},
simple calculations yield
\begin{equation}\label{hessian-vector}
\begin{array}{rcl}
  \nabla^2\tilde f(x;\mu)\tilde d&=&\displaystyle\sum_{i\in
{S(x;\mu,\epsilon)}}\left(\left(\frac{1}{\mu}-\frac{1}{{g_i(x;\mu)}}\right)\frac{\tilde\lambda_i(x;\mu)}{g_i(x;\mu)^2}(x-c_i)(x-c_i)^T\right)\tilde d\\
  &&+\displaystyle\sum_{i\in
{S(x;\mu,\epsilon)}}\frac{\tilde\lambda_i(x;\mu)}{{g_i(x;\mu)}}\tilde d-\frac{1}{\mu}\nabla
  \tilde f(x;\mu)\nabla \tilde f(x;\mu)^T\tilde d\\
  &=&\displaystyle\sum_{i\in
{S(x;\mu,\epsilon)}}\left[\left(\frac{1}{\mu}-\frac{1}{{g_i(x;\mu)}}\right)\frac{\tilde\lambda_i(x;\mu)}{g_i(x;\mu)^2}(x-c_i)\right](x-c_i)^T\tilde d\\
  &&+\displaystyle\left[\sum_{i\in
{S(x;\mu,\epsilon)}}\frac{\tilde \lambda_i(x;\mu)}{{g_i(x;\mu)}}\right]\tilde d-{\nabla \tilde f(x;\mu)^T\tilde d}\left[\frac{\nabla
  \tilde f(x;\mu)}{\mu}\right].
  \end{array}
\end{equation} The way of calculating $\nabla^2\tilde f(x;\mu)\tilde d$ by
\eqref{hessian-vector}\footnote{The terms in square brackets in \eqref{hessian-vector} are constants in the inner CG iteration, since they are not related to the variable $\tilde d$.} is typically different from the way of first
calculating $\nabla^2\tilde f(x;\mu)$ and then calculating
$\nabla^2\tilde f(x;\mu)\tilde d.$ The complexity of computing $\nabla^2\tilde f(x;\mu)\tilde d$ using the above two ways are $O(|S(x;\mu,\epsilon)|n)$ and $O\left((|S(x;\mu,\epsilon)|+n)n^2\right),$ respectively. It is worthwhile remarking that the computational complexity of calculating $\nabla^2 f(x;\mu)d$ using the above mentioned two ways of are $O(mn)$ and $O\left((m+n)n^2\right),$ respectively. Notice that $|S(x;\mu,\epsilon)|$ is usually much less than $m.$ Hence, computing $\nabla^2\tilde f(x;\mu)\tilde d$ by \eqref{hessian-vector} can sharply reduce the computational cost and simultaneously save a lot of memory (since we do not need to store the $n\times n$ matrix $\nabla^2\tilde f(x;\mu)$). 

Let $\tilde d_f$ be the obtained direction satisfying \eqref{termination} by applying the CG method to solve the linear equation \eqref{newtonapp} with the starting point $\tilde d_0=0.$ In the sequential, we state two properties of the direction $\tilde d_f$. These two properties shall be used late in global convergence analysis of the proposed algorithm.%

\begin{lemma}\label{descent}
Consider applying the CG method to solve \eqref{newtonapp} with the starting point $\tilde d_0=0.$ Suppose $\nabla \tilde f(x;\mu)\neq 0$ and $\tilde d_f$ is the obtained search direction satisfying \eqref{termination}. Then \begin{equation}\label{descentproperty}
 {\tilde d}_f^T\nabla \tilde f(x;\mu) = -\tilde d_f^T\nabla^2\tilde f(x;\mu)\tilde d_f<0.
\end{equation}
\end{lemma}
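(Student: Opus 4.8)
The plan is to reduce the statement to two classical facts about the linear conjugate gradient (CG) method run on a symmetric positive definite system from the zero initial guess. Write $A:=\nabla^2\tilde f(x;\mu)$ and $b:=-\nabla\tilde f(x;\mu)$, so that the inexact Newton equation \eqref{newtonapp} becomes $A\tilde d=b$; by the remark following \eqref{newtonapp} (the same argument that establishes strict convexity in (iii) of Lemma \ref{yinli}) the matrix $A$ is positive definite, and $b\ne 0$ by hypothesis. CG started at $\tilde d_0=0$ generates iterates $\tilde d_0,\tilde d_1,\dots,\tilde d_f$ with residuals $r_k:=b-A\tilde d_k$, and the first fact I will use is that, precisely because $\tilde d_0=0$, every iterate $\tilde d_k$ lies in the Krylov subspace $\mathcal{K}_k(A,b)=\mathrm{span}\{b,Ab,\dots,A^{k-1}b\}$, while the residual $r_k$ is orthogonal to $\mathcal{K}_k(A,b)$. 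Hence $\tilde d_k^{T}r_k=0$ for every $k$, and in particular at the terminal index $\tilde d_f^{T}(b-A\tilde d_f)=0$, i.e.\ $\tilde d_f^{T}b=\tilde d_f^{T}A\tilde d_f$. Substituting back $b=-\nabla\tilde f(x;\mu)$ and $A=\nabla^2\tilde f(x;\mu)$ gives exactly the identity $\tilde d_f^{T}\nabla\tilde f(x;\mu)=-\tilde d_f^{T}\nabla^2\tilde f(x;\mu)\tilde d_f$ asserted in \eqref{descentproperty}.

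It then remains to show that this common value is strictly negative, for which it suffices to prove $\tilde d_f\ne 0$: positive definiteness of $A$ would then give $\tilde d_f^{T}A\tilde d_f>0$. Here I would exploit that the forcing term satisfies $\tilde\eta(x;\mu)\le 0.5<1$. If the terminal iterate were $\tilde d_f=0$, its residual would be $r_f=b$, and the stopping rule \eqref{termination}, which in the present notation reads $\|r_f\|\le\tilde\eta(x;\mu)\,\|b\|$, would force $\|b\|\le\tilde\eta(x;\mu)\,\|b\|<\|b\|$, impossible since $b\ne 0$. Hence CG does not stop at the zero iterate, so $\tilde d_f\ne 0$, and combining with the previous paragraph yields $\tilde d_f^{T}\nabla\tilde f(x;\mu)=-\tilde d_f^{T}\nabla^2\tilde f(x;\mu)\tilde d_f<0$.

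The one step that is not completely routine is the CG orthogonality $\tilde d_k^{T}r_k=0$, which is exactly where the choice $\tilde d_0=0$ enters; I would either cite it (it is standard, see e.g.\ \cite{Yuan}) or derive it directly from the CG recursions. In the latter route one writes $\tilde d_k=\sum_{j=0}^{k-1}\alpha_j p_j$ with step lengths $\alpha_j=\|r_j\|^2/(p_j^{T}Ap_j)>0$ (well defined and positive because $A$ is positive definite and $r_j\ne 0$ before termination), the search directions $p_0,\dots,p_{k-1}$ being mutually $A$-conjugate and nonzero, and then uses the standard residual orthogonality $p_j^{T}r_k=0$ for $j<k$ to obtain $\tilde d_k^{T}r_k=\sum_{j=0}^{k-1}\alpha_j p_j^{T}r_k=0$. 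This route also makes $\tilde d_f\ne 0$ immediate: since the stopping test fails at $\tilde d_0=0$ we have $f\ge 1$, so $\tilde d_f$ is a strictly positive combination of the linearly independent vectors $p_0,\dots,p_{f-1}$, hence nonzero.
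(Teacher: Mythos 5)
Your proposal is correct and follows essentially the same route as the paper: the identity comes from writing $\tilde d_f$ as a combination of the CG search directions and invoking the orthogonality of the terminal residual to those directions (equivalently, to the Krylov subspace containing $\tilde d_f$), and the strict inequality comes from ruling out $\tilde d_f=0$ via the stopping test $\tilde\eta(x;\mu)\le 0.5<1$ together with positive definiteness of $\nabla^2\tilde f(x;\mu)$. The only cosmetic difference is your opposite sign convention for the residual and the Krylov-subspace phrasing, which your final paragraph unwinds into exactly the paper's computation.
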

\begin{proof} Since the starting point $\tilde d_0=0,$ the final point $\tilde d_f$ in the CG iteration must have the form $\tilde d_f=\sum_{j=0}^{f-1}\tilde \alpha_j\tilde p_j$ \cite{program,Yuan}, where $\left\{\tilde \alpha_j\right\}_{j=0}^{f-1}$ and $\left\{\tilde p_j\right\}_{j=0}^{f-1}$ are step sizes and search directions in the CG iteration. Notice that $\nabla \tilde f(x;\mu)\neq 0,$ then $\tilde d_f\neq 0.$ Otherwise, substituting $\tilde d_f=0$ into \eqref{termination}, we shall get
$$\left\|\nabla \tilde f(x;\mu)\right\|\leq
\tilde\eta(x;\mu) \left\|\nabla \tilde f(x;\mu)\right\|\leq 0.5\left\|\nabla \tilde f(x;\mu)\right\|,$$ which contradicts the fact $\nabla \tilde f(x;\mu)\neq 0.$ Let \begin{equation}\label{tilder}\tilde r_f=\nabla^2\tilde f(x;\mu)\tilde d_f+\nabla \tilde f(x;\mu).\end{equation}
Then, it follows from \cite[Theorem 5.2]{program} that \begin{equation}\label{ortho}\tilde r_f^T\tilde p_j=0,~j=0,1,\ldots,f-1.\end{equation} Hence, \begin{align*}
  \tilde d_f^T\nabla \tilde f(x;\mu)=&~\tilde d_f^T\left(\tilde r_f-\nabla^2\tilde f(x;\mu)\tilde d_f\right)~(\text{from}~\eqref{tilder})\\
                             =&~\tilde d_f^T\tilde r_f-\tilde d_f^T\nabla^2\tilde f(x;\mu)\tilde d_f\\
                             =&~\sum_{j=0}^{f-1}\tilde \alpha_j\tilde p_j^T\tilde r_f-\tilde d_f^T\nabla^2\tilde f(x;\mu)\tilde d_f~(\text{substituting}~\tilde d_f=\sum_{j=0}^{f-1}\tilde \alpha_j\tilde p_j)\\
                             =&~-\tilde d_f^T\nabla^2\tilde f(x;\mu)\tilde d_f~(\text{from}~\eqref{ortho})\\
                             <&~0,
\end{align*}where the last inequality is due to positive definiteness of~$\nabla^2\tilde f(x;\mu)$ and the fact $\tilde d_f\neq0.$ The proof is completed. \end{proof}
%
%
\begin{lemma}
  Suppose $\tilde d$ satisfies \eqref{termination}, and $\tilde \sigma_{\max}(x;\mu)$ and $\tilde \sigma_{\min}(x;\mu)>0$ are the maximum and minimum eigenvalues of $\nabla^2\tilde f(x;\mu),$ respectively. Then
  \begin{equation}\label{length}
    \dfrac{1-\tilde\eta(x;\mu)}{\tilde \sigma_{\max}(x;\mu)}\left\|\nabla\tilde f(x;\mu)\right\|\leq \left\|\tilde d\right\|\leq \frac{1+\tilde\eta(x;\mu)}{\tilde \sigma_{\min}(x;\mu)}\left\|\nabla\tilde f(x;\mu)\right\|.
  \end{equation}
\end{lemma}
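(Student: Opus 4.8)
The plan is to reduce everything to two elementary facts: (a) since $\nabla^2\tilde f(x;\mu)$ is symmetric positive definite (as noted just before \eqref{hessian-vector}), its action on any vector $v$ satisfies the spectral sandwich
\[
\tilde\sigma_{\min}(x;\mu)\,\|v\|\;\leq\;\left\|\nabla^2\tilde f(x;\mu)\,v\right\|\;\leq\;\tilde\sigma_{\max}(x;\mu)\,\|v\|;
\]
and (b) the defining inequality \eqref{termination} together with the (reverse) triangle inequality pins down $\|\nabla^2\tilde f(x;\mu)\tilde d\|$ from both sides in terms of $\|\nabla\tilde f(x;\mu)\|$.

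For the two-sided bound in (b): writing $\tilde r=\nabla^2\tilde f(x;\mu)\tilde d+\nabla\tilde f(x;\mu)$, condition \eqref{termination} says $\|\tilde r\|\leq\tilde\eta(x;\mu)\|\nabla\tilde f(x;\mu)\|$. Then $\nabla^2\tilde f(x;\mu)\tilde d=\tilde r-\nabla\tilde f(x;\mu)$, so the triangle inequality gives
\[
\left\|\nabla^2\tilde f(x;\mu)\tilde d\right\|\;\leq\;\|\tilde r\|+\left\|\nabla\tilde f(x;\mu)\right\|\;\leq\;\bigl(1+\tilde\eta(x;\mu)\bigr)\left\|\nabla\tilde f(x;\mu)\right\|,
\]
and the reverse triangle inequality gives
\[
\left\|\nabla^2\tilde f(x;\mu)\tilde d\right\|\;\geq\;\left\|\nabla\tilde f(x;\mu)\right\|-\|\tilde r\|\;\geq\;\bigl(1-\tilde\eta(x;\mu)\bigr)\left\|\nabla\tilde f(x;\mu)\right\|.
\]

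Combining with (a) applied to $v=\tilde d$: the upper estimate of (a), $\tilde\sigma_{\min}(x;\mu)\|\tilde d\|\leq\|\nabla^2\tilde f(x;\mu)\tilde d\|$, chained with the upper bound just derived yields the right-hand inequality of \eqref{length}; the lower estimate of (a), $\|\nabla^2\tilde f(x;\mu)\tilde d\|\leq\tilde\sigma_{\max}(x;\mu)\|\tilde d\|$, chained with the lower bound yields the left-hand inequality of \eqref{length}. Finally I would note that the left-hand side is genuinely informative (and nonnegative) because $\tilde\eta(x;\mu)=\min\{0.5,\sqrt{\|\nabla\tilde f(x;\mu)\|}\}\leq 0.5<1$, so $1-\tilde\eta(x;\mu)>0$.

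There is no real obstacle here; the only point that needs a moment's care is to apply the spectral bounds to the vector $\tilde d$ itself rather than to the residual $\tilde r$, and to invoke the positive definiteness of $\nabla^2\tilde f(x;\mu)$ (hence $\tilde\sigma_{\min}(x;\mu)>0$) to legitimize dividing by it in the upper bound of \eqref{length}.
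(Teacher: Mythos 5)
Your proof is correct and uses exactly the same two ingredients as the paper's: the spectral bounds $\tilde\sigma_{\min}\|\tilde d\|\leq\|\nabla^2\tilde f(x;\mu)\tilde d\|\leq\tilde\sigma_{\max}\|\tilde d\|$ and the (reverse) triangle inequality applied to the residual, the only cosmetic difference being that you chain the inequalities directly while the paper phrases the same chain as a proof by contradiction. No gap.
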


\begin{proof}
  Suppose the second inequality in \eqref{length} does not hold true, i.e., $$\left\|\tilde d\right\|> \frac{1+\tilde\eta(x;\mu)}{\tilde \sigma_{\min}(x;\mu)}\left\|\nabla\tilde f(x;\mu)\right\|.$$ Then 
  \begin{align*}
    \left\|\nabla^2\tilde f(x;\mu)\tilde d+\nabla \tilde f(x;\mu)\right\|\geq &~\left\|\nabla^2\tilde f(x;\mu)\tilde d\right\|-\left\|\nabla \tilde f(x;\mu)\right\|\\
                                                                  \geq &~\tilde \sigma_{\min}(x;\mu)\left\|\tilde d\right\|-\left\|\nabla \tilde f(x;\mu)\right\|\\
                                                                  > &~\left(1+\tilde\eta\left(x;\mu\right)\right)\left\|\nabla\tilde f(x;\mu)\right\|-\left\|\nabla \tilde f(x;\mu)\right\|\\
                                                                  =&~\tilde\eta(x;\mu)\left\|\nabla\tilde f(x;\mu)\right\|,
  \end{align*}which contradicts \eqref{termination}. Hence, the second inequality in \eqref{length} holds true.

  The similar argument shows the first inequality in \eqref{length} is also true. The proof is completed. 
\end{proof}


\subsection{Inexact Newton-CG algorithm}\label{secother}

When the smoothing parameter $\mu$ approaches zero,
$\exp\left(f_i(x;\mu)/\mu\right)$ tends to be very large.
The special care should be taken in computing $f(x;\mu)$ and $\tilde\lambda_i(x;\mu)$ to prevent overflow \cite{21}, i.e.,
\begin{align}
  f(x;\mu)&=f_{\infty}(x;\mu)+\mu\ln\left(\sum_{i=1}^m\exp(\left(f_i(x;\mu)-f_{\infty}(x;\mu)\right)/\mu)\right),\label{fcom}\\
  \tilde\lambda_i(x;\mu)&=\dfrac{\exp\left(\left(f_i(x;\mu)-f_{\infty}(x;\mu)\right)/\mu\right)}{\sum_{j\in
  {S(x;\mu,\epsilon)}}\exp(\left(f_j(x;\mu)-f_{\infty}(x;\mu)\right)/\mu)},~i\in {S(x;\mu,\epsilon)}.\label{lamcom}
\end{align}

%
%
%

Based on the above discussions, the specification of the proposed inexact Newton-CG algorithm for solving the SEB problem is given as follows.

\begin{algorithm}\caption{}\label{alg2}
\begin{algorithmic}[1]
\STATE Let $\epsilon_1,~c_1\in(0,1),~\beta\in(0,1),~\left\{\mu_k,~\epsilon_2(\mu_k),~\epsilon_3(\mu_k)\right\}_k,~x_{0,0}\in
\mathbb{R}^n$ be given and set $k=j=0.$
\REPEAT
\REPEAT
\STATE  Compute $S(x_{k,j};\mu_k,\frac{\mu_k\epsilon_3(\mu_k)}{10m})$ according to \eqref{set}.\label{line:epsilon3}
\STATE Compute the search direction $\tilde d_{k,j}$ by applying
the CG method to the inexact Newton equation $\nabla^2\tilde
f(x_{k,j};\mu_k)\tilde d=-\nabla\tilde f(x_{k,j};\mu_k)$ such that \begin{equation}\label{direction}\left\|\nabla^2\tilde f(x_{k,j};\mu_k)\tilde d_{k,j}+\nabla \tilde f(x_{k,j};\mu_k)\right\|\leq \tilde\eta_{k,j}\left\|\nabla \tilde f(x_{k,j};\mu_k)\right\|,\end{equation} where \begin{equation}\label{eta}
  \tilde \eta_{k,j}=\min\left\{0.5,\,\sqrt{\left\|\nabla \tilde f(x_{k,j};\mu_k)\right\|}\right\},
 \end{equation}
the Hessian-vector product $\nabla^2\tilde f(x_{k,j};\mu_k)\tilde d$ in the inner CG iteration, $\nabla\tilde
f(x_{k,j};\mu_k),$ and
$\tilde\lambda_i(x_{k,j};\mu_k)$ are computed by \eqref{hessian-vector}, \eqref{tildeg}, and \eqref{lamcom}, respectively.
\STATE Set $x_{k,j+1}=x_{k,j}+\alpha_{k,j}\tilde d_{k,j},$ where
$\alpha_{k,j}=\beta^l,$ with $\beta\in(0,1)$ and $l$ being the smallest integer satisfying the sufficient decrease condition
\begin{equation}\label{sufficient}
  f(x_{k,j}+\beta^l\tilde d_{k,j};\mu_k)\leq f(x_{k,j};\mu_k)+c_1\beta^l \tilde d_{k,j}^T\nabla \tilde f(x_{k,j};\mu_k),
\end{equation}where $f(x_{k,j};\mu_k)$ is computed by \eqref{fcom}.
\STATE Set $j=j+1.$
\UNTIL {$\left\|\nabla\tilde f(x_{k,j};\mu_k)\right\|\leq {\epsilon_2(\mu_k)}$}
\STATE Set $x_{k+1,0}=x_{k,j}$ and $k=k+1.$
\UNTIL {$\mu_k\leq\epsilon_1$}
  \end{algorithmic}
\end{algorithm}

The actual parameter values used for $\epsilon_1,~c_1,~\beta,~\left\{\mu_k,~\epsilon_2(\mu_k),~\epsilon_3(\mu_k)\right\}$ in Algorithm \ref{alg2} shall be given in Section \ref{experiment}. As we can see, all parameters in Algorithm \ref{alg2} are updated adaptively. For instance, the final iterate $x_{k,j}$ is set to be a warm starting point for the problem $\min_{x\in\mathbb{R}^n}f(x;\mu_{k+1}),$ and the tolerance $\epsilon_2(\mu_k)$ is set to be related to the approximation parameter $\mu_k.$


It is worthwhile pointing out that if we set $\epsilon_3(\mu_k)$ to be zero in line \ref{line:epsilon3} of the proposed Algorithm 2, then the proposed algorithm reduces to apply the classical Newton-CG algorithm to solve the smoothing approximation problem \eqref{sub}. Hence, the sequences generated by the proposed Algorithm \ref{alg2} converge to the unique solution of problem \eqref{nonsmooth} according to \cite[Theorem 3]{16}. In the next section, we shall show that even though the parameters $\epsilon_3(\mu_k)$ are positive, i.e., the gradient and Hessian-vector product are inexactly computed to reduce the computational cost, the proposed inexact Newton-CG Algorithm 2 is still globally convergent. 

%
%

\section{Convergence Analysis}\label{convergence}
In this section, we establish global convergence of the proposed Algorithm \ref{alg2} with an appropriate choice of parameters. For any $\mu>0,$ since $f(x;\mu)$ is strictly convex (see Lemma \ref{yinli}) and coercive in $x$, the level set \begin{equation}\label{omega}\Omega{(\mu)}=\left\{x\,|\,f(x;\mu)\leq f(x_{0,0};\mu_0)\right\}\end{equation} must be convex and bounded, where $x_{0,0}$ is the initial point in Algorithm \ref{alg2}. Furthermore, since the set $\left\{1,2,\ldots,m\right\}$ has a finite number of subsets, then
there must exist $\sigma_{\max}(\mu)\geq\sigma_{\min}(\mu)>0$ such that, for $\nabla^2\tilde f(x;\mu)$ defined on any proper subset of $\left\{1,2,\ldots,m\right\},$ we have \begin{equation}\label{sigma}\sigma_{\max}(\mu)I_n\succeq\nabla^2\tilde f(x;\mu)\succeq \sigma_{\min}(\mu) I_n,~\forall~x\in\Omega(\mu).\end{equation} As a particular case, we have$$\sigma_{\max}(\mu)I_n\succeq\nabla^2f(x;\mu)\succeq \sigma_{\min}(\mu) I_n,~\forall~x\in\Omega(\mu).$$ Before establishing global convergence of the proposed Algorithm \ref{alg2}, we first show that it is well defined. In particular, we prove that the proposed algorithm can always find a step length $\alpha_{k,j}=\beta^l$ satisfying \eqref{sufficient} in finite steps (see Lemma \ref{step}) and there exists $j_k$ such that $\left\|\nabla\tilde f(x_{k,j_k};\mu_k)\right\|\leq {\epsilon_2(\mu_k)}$ (see Lemma \ref{lemma-termination}).
\begin{lemma}\label{step}
  Suppose $\left\|\nabla\tilde f(x_{k,j};\mu_k)\right\|> {\epsilon_2(\mu_k)},$ and set \begin{equation}\label{epsilon3}\epsilon_3(\mu_k)\leq \frac{\epsilon_2(\mu_k)}{c_2(\mu_k)},\end{equation} where \begin{equation}\label{c2}c_2(\mu_k)>\frac{6\mu_k\sigma_{\max}^2(\mu_k)}{5(1-c_1)\sigma_{\min}^2(\mu_k)}.\end{equation} Then the step length $\alpha_{k,j}$ satisfying the sufficient decrease condition \eqref{sufficient} can be found in
  $\left\lceil\frac{\ln\left(\bar\alpha(\mu_k)\right)}{\ln\left(\beta\right)}\right\rceil$ steps, and \begin{equation}\label{lowerbound}
  \alpha_{k,j}\geq \beta\bar\alpha(\mu_k),~\forall~j,
\end{equation}where \begin{equation}\label{alphaupper}\bar\alpha(\mu_k)=\frac{2(1-c_1)\sigma_{\min}^3(\mu_k)}{9\sigma_{\max}^3(\mu_k)}-\frac{4\mu_k\sigma_{\min}(\mu_k)}{15\sigma_{\max}(\mu_k)c_2(\mu_k)}>0.\end{equation} \end{lemma}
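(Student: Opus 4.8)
The goal is a standard Armijo-backtracking analysis, but with the twist that the gradient in the sufficient-decrease test is the \emph{inexact} $\nabla\tilde f$ while the function being decreased is the \emph{exact} $f(\cdot;\mu_k)$. The plan is to lower-bound the per-step decrease $f(x_{k,j};\mu_k)-f(x_{k,j}+\alpha\tilde d_{k,j};\mu_k)$ along the ray, show it dominates $c_1\alpha\,\tilde d_{k,j}^T\nabla\tilde f(x_{k,j};\mu_k)$ for all $\alpha\le\bar\alpha(\mu_k)$, and then conclude the backtracking stops at the first $\beta^l\le\bar\alpha(\mu_k)$, which gives both the iteration count $\lceil \ln(\bar\alpha(\mu_k))/\ln\beta\rceil$ and the lower bound $\alpha_{k,j}\ge\beta\bar\alpha(\mu_k)$.

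First I would use the descent inequality from \eqref{sigma}: since the exact Hessian $\nabla^2 f(\cdot;\mu_k)$ satisfies $\nabla^2 f\preceq\sigma_{\max}(\mu_k)I_n$ on $\Omega(\mu_k)$, Taylor expansion along the segment (which stays in $\Omega(\mu_k)$ once we know the iterate does not leave the level set — I would track this by noting that $\bar\alpha$ is small enough that the first accepted step cannot increase $f$) gives
\[
f(x_{k,j}+\alpha\tilde d_{k,j};\mu_k)\le f(x_{k,j};\mu_k)+\alpha\,\tilde d_{k,j}^T\nabla f(x_{k,j};\mu_k)+\tfrac{\sigma_{\max}(\mu_k)}{2}\alpha^2\|\tilde d_{k,j}\|^2.
\]
Next I would replace $\nabla f$ by $\nabla\tilde f$ at the cost of the error term \eqref{gerror}, i.e. $\tilde d_{k,j}^T\nabla f\le \tilde d_{k,j}^T\nabla\tilde f+\|\tilde d_{k,j}\|\cdot\mu_k\epsilon_3(\mu_k)/5$. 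The sufficient decrease condition \eqref{sufficient} is therefore implied by
\[
(1-c_1)\,\bigl(-\tilde d_{k,j}^T\nabla\tilde f(x_{k,j};\mu_k)\bigr)\ge \tfrac{\sigma_{\max}(\mu_k)}{2}\alpha\|\tilde d_{k,j}\|^2+\tfrac{\mu_k\epsilon_3(\mu_k)}{5}\|\tilde d_{k,j}\|.
\]
Now I would invoke the two structural lemmas: Lemma \ref{descent} gives $-\tilde d_{k,j}^T\nabla\tilde f=\tilde d_{k,j}^T\nabla^2\tilde f\,\tilde d_{k,j}\ge\sigma_{\min}(\mu_k)\|\tilde d_{k,j}\|^2$, and the length lemma \eqref{length} (together with $\tilde\eta_{k,j}\le 1/2$ from \eqref{eta} and $\nabla^2\tilde f\succeq\sigma_{\min}(\mu_k)I_n$) gives $\|\tilde d_{k,j}\|\le\tfrac{3}{2\sigma_{\min}(\mu_k)}\|\nabla\tilde f\|$ and also a matching lower bound $\|\tilde d_{k,j}\|\ge\tfrac{1}{2\sigma_{\max}(\mu_k)}\|\nabla\tilde f\|$. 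Dividing the displayed inequality through by $\|\tilde d_{k,j}\|$ and using these bounds together with $\|\nabla\tilde f(x_{k,j};\mu_k)\|>\epsilon_2(\mu_k)$, the assumption \eqref{epsilon3} that $\epsilon_3(\mu_k)\le\epsilon_2(\mu_k)/c_2(\mu_k)$, and the choice \eqref{c2} of $c_2(\mu_k)$, the error term $\mu_k\epsilon_3(\mu_k)/5$ is absorbed into a fixed fraction of $(1-c_1)\sigma_{\min}(\mu_k)\|\tilde d_{k,j}\|$, and what remains is exactly the statement that \eqref{sufficient} holds whenever $\alpha\le\bar\alpha(\mu_k)$ with $\bar\alpha(\mu_k)$ as in \eqref{alphaupper}; the condition \eqref{c2} is precisely what makes $\bar\alpha(\mu_k)>0$.

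Finally I would finish the bookkeeping: since \eqref{sufficient} holds for every $\alpha\in(0,\bar\alpha(\mu_k)]$, the backtracking loop terminates at the first $l$ with $\beta^l\le\bar\alpha(\mu_k)$, i.e. $l\le\lceil\ln(\bar\alpha(\mu_k))/\ln\beta\rceil$, and for that $l$ we have $\alpha_{k,j}=\beta^l>\beta\cdot\beta^{l-1}\ge\beta\bar\alpha(\mu_k)$, proving \eqref{lowerbound}. The main obstacle I anticipate is not any single estimate but the careful matching of constants so that the two terms on the right-hand side of the reduced inequality split exactly as in \eqref{alphaupper} — in particular verifying that $c_2(\mu_k)$ chosen by \eqref{c2} is large enough to keep the subtracted quantity in \eqref{alphaupper} strictly below the first, which is a direct algebraic check once the inequality chain above is assembled. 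A secondary subtlety is confirming that the iterate genuinely stays in $\Omega(\mu_k)$ so that \eqref{sigma} and the error bounds apply along the whole segment; this follows because the accepted step satisfies \eqref{sufficient} with $\tilde d_{k,j}^T\nabla\tilde f<0$ (Lemma \ref{descent}), hence $f(x_{k,j+1};\mu_k)\le f(x_{k,j};\mu_k)\le f(x_{0,0};\mu_0)$ by induction, and an a~priori argument (or a standard "first exit" reduction of $\alpha$) handles the interior points of the segment.
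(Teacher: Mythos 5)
Your proposal is correct and follows essentially the same route as the paper: a second-order expansion of $f(\cdot;\mu_k)$ along $\tilde d_{k,j}$, split into the inexact-gradient error term (bounded via \eqref{gerror}, \eqref{epsilon3} and $\|\nabla\tilde f(x_{k,j};\mu_k)\|>\epsilon_2(\mu_k)$), the descent term (via Lemma \ref{descent} and \eqref{sigma}), and the curvature term, with the length bounds \eqref{length} and $\tilde\eta_{k,j}\le 1/2$ converting everything to a common scale so that \eqref{c2} makes $\bar\alpha(\mu_k)>0$, followed by the standard backtracking bookkeeping. The only difference is cosmetic (you normalize by $\|\tilde d_{k,j}\|$ where the paper works in $\|\nabla\tilde f(x_{k,j};\mu_k)\|^2$, which slightly changes the intermediate constants), and you additionally flag the level-set containment needed to apply \eqref{sigma} at the Taylor remainder point, which the paper uses implicitly.
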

\begin{proof} By the mean value theorem, there exists $s\in(0,1)$ such that
\begin{equation}\label{ABC}
\begin{array}{rl}
  &f(x_{k,j}+\alpha \tilde d_{k,j};\mu_k)-f(x_{k,j};\mu_k)-c_1\alpha \tilde d_{k,j}^T\nabla\tilde f(x_{k,j};\mu_k)\\[5pt]
 =&\alpha \underbrace{\tilde d_{k,j}^T\left(\nabla f(x_{k,j};\mu_k)-\nabla\tilde f\left(x_{k,j};\mu_k\right)\right)}_{\text{Term A}}+\left(1-c_1\right)\alpha \underbrace{\tilde d_{k,j}^T\nabla\tilde f(x_{k,j};\mu_k)}_{\text{Term B}}\\
 &+\frac{\alpha^2}{2}\underbrace{\tilde d_{k,j}^T\nabla^2 f\left(x_{k,j}+\alpha s \tilde d_{k,j};\mu_k\right)\tilde d_{k,j}}_{\text{Term C}}.
 \end{array}
\end{equation}
Next, we upper bound Term A, Term B, and Term C in the above, respectively.
It follows from \eqref{gerror} and \eqref{epsilon3} that
\begin{equation}\label{mepsilon}\left\|\nabla f(x;\mu_k)-\nabla \tilde
f(x;\mu_k)\right\|\leq \frac{\mu_k\epsilon_3(\mu_k)}{5}\leq \frac{\mu_k\epsilon_2(\mu_k)}{5c_2(\mu_k)}.\end{equation} Furthermore, since
\begin{equation}\label{rate1}\left\|\nabla\tilde f(x_{k,j};\mu_k)\right\|> {\epsilon_2(\mu_k)},\end{equation}there holds
$$\left\|\nabla f(x_{k,j};\mu_k)-\nabla\tilde f(x_{k,j};\mu_k)\right\|\leq \frac{\mu_k}{5c_2(\mu_k)}\left\|\nabla\tilde f(x_{k,j};\mu_k)\right\|.$$
Combining the above inequality, the second inequality of \eqref{length}, and \eqref{eta}, we have
\begin{align}
  \tilde d_{k,j}^T\left(\nabla f(x_{k,j};\mu_k)-\nabla\tilde f(x_{k,j};\mu_k)\right) &\leq \displaystyle\left\|\tilde d_{k,j}\right\|\left\|\nabla f(x_{k,j};\mu_k)-\nabla\tilde f(x_{k,j};\mu_k)\right\|\nonumber\\
  &\leq {\frac{3\mu_k}{10c_2(\mu_k)\sigma_{\min}(\mu_k)} \left\|\nabla\tilde f(x_{k,j};\mu_k)\right\|^2}.\label{bb1}
\end{align}
From the first inequality of \eqref{length}, \eqref{descentproperty}, \eqref{eta} and \eqref{sigma}, we obtain
\begin{align}\label{bb2}
  \tilde d_{k,j}^T\nabla\tilde f(x_{k,j};\mu_k)~=&~-\tilde d_{k,j}^T\nabla^2\tilde f(x_{k,j};\mu_k)\tilde d_{k,j}~(\text{from}~\eqref{descentproperty})\\\nonumber
                                        \leq &~-\sigma_{\min}(\mu_k)\left\|\tilde d_{k,j}\right\|^2~(\text{from}~\eqref{sigma})\\\nonumber
                                        \leq &~\frac{-\sigma_{\min}(\mu_k)}{4\sigma_{\max}^2(\mu_k)}\left\|\nabla\tilde f(x_{k,j};\mu_k)\right\|^2.~(\text{from}~\eqref{length}~\text{and}~\eqref{eta})\nonumber
\end{align}The similar argument as in \eqref{bb2} shows that Term C in \eqref{ABC} can be upper bounded by
\begin{equation}\label{bb3}
  \tilde d_{k,j}^T\nabla^2 f(x_{k,j}+\alpha s \tilde d_{k,j};\mu_k)\tilde d_{k,j}\leq\frac{9\sigma_{\max}(\mu_k)}{4\sigma_{\min}^2(\mu_k)} \left\|\nabla\tilde f(x_{k,j};\mu_k)\right\|^2.
\end{equation}
By combining \eqref{ABC},~\eqref{bb1},~\eqref{bb2}, and \eqref{bb3}, we obtain
\begin{align*}\label{errABC}
&~f(x_{k,j}+\alpha \tilde d_{k,j};\mu_k)-f(x_{k,j};\mu_k)-c_1\alpha \tilde d_{k,j}^T\nabla\tilde f(x_{k,j};\mu_k)\\\nonumber
\leq&~\alpha\left\|\nabla\tilde f(x_{k,j};\mu_k)\right\|^2\left(\frac{3\mu_k}{10c_2(\mu_k)\sigma_{\min}(\mu_k)}- \frac{(1-c_1)\sigma_{\min}(\mu_k)}{4\sigma_{\max}^2(\mu_k)}+{\alpha}{\frac{9\sigma_{\max}(\mu_k)}{8\sigma_{\min}^2(\mu_k)}}\right).\nonumber\end{align*}
Consequently, it follows from \eqref{alphaupper} that, for any $l$ such that $\beta^l\leq \bar\alpha(\mu_k),$ $\beta^l$ satisfies the inequality \eqref{sufficient}, and the inequality \eqref{lowerbound} holds true.
This completes the proof of Lemma \ref{step}.
%
%
%
\end{proof}

\begin{lemma}\label{lemma-termination}
  Suppose $\mu=\mu_k$ and $\left\{x_{k,j}\right\}$ be the sequence generated by Algorithm \ref{alg2}. Then there must exist $j_k$ such that \begin{equation}\label{epsilon2}\left\|\nabla\tilde f(x_{k,j_k};\mu_k)\right\|\leq {\epsilon_2(\mu_k)}.\end{equation}
\end{lemma}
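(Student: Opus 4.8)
The plan is to show that the inner loop for fixed $\mu_k$ cannot run forever, i.e., that the stopping criterion $\|\nabla\tilde f(x_{k,j};\mu_k)\|\le\epsilon_2(\mu_k)$ must be triggered at some finite index $j_k$. I would argue by contradiction: assume $\|\nabla\tilde f(x_{k,j};\mu_k)\|>\epsilon_2(\mu_k)$ for all $j$. Under this assumption Lemma \ref{step} applies at every iteration $j$, so the step length is uniformly bounded below by $\beta\bar\alpha(\mu_k)>0$, and the sufficient decrease condition \eqref{sufficient} holds with this step. The first step is therefore to combine \eqref{sufficient}, \eqref{bb2} (the descent estimate $\tilde d_{k,j}^T\nabla\tilde f(x_{k,j};\mu_k)\le -\tfrac{\sigma_{\min}(\mu_k)}{4\sigma_{\max}^2(\mu_k)}\|\nabla\tilde f(x_{k,j};\mu_k)\|^2$), and the lower bound \eqref{lowerbound} to get
\begin{equation*}
f(x_{k,j+1};\mu_k)\le f(x_{k,j};\mu_k)-c_1\,\beta\bar\alpha(\mu_k)\,\frac{\sigma_{\min}(\mu_k)}{4\sigma_{\max}^2(\mu_k)}\,\left\|\nabla\tilde f(x_{k,j};\mu_k)\right\|^2<f(x_{k,j};\mu_k)-c_1\,\beta\bar\alpha(\mu_k)\,\frac{\sigma_{\min}(\mu_k)}{4\sigma_{\max}^2(\mu_k)}\,\epsilon_2^2(\mu_k).
\end{equation*}

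Next I would observe that the sequence $\{f(x_{k,j};\mu_k)\}_j$ is nonincreasing, so all iterates $x_{k,j}$ lie in the level set $\Omega(\mu_k)$ of \eqref{omega} (after noting $f(x_{k,0};\mu_k)=f(x_{k-1,j_{k-1}};\mu_k)\le f(x_{k-1,j_{k-1}};\mu_{k-1})\le\dots\le f(x_{0,0};\mu_0)$, using monotonicity in $\mu$ from Lemma \ref{yinli}(i); alternatively one just works with the level set at level $f(x_{k,0};\mu_k)$, which is still bounded by strict convexity and coercivity). In particular $f(x;\mu_k)$ is bounded below on $\Omega(\mu_k)$ by its minimum value $f^*(\mu_k)>-\infty$. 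But the displayed inequality forces $f(x_{k,j};\mu_k)\to-\infty$ as $j\to\infty$ — a strict decrease by a fixed positive constant $c_1\beta\bar\alpha(\mu_k)\sigma_{\min}(\mu_k)\epsilon_2^2(\mu_k)/(4\sigma_{\max}^2(\mu_k))$ at each step — which contradicts boundedness below. Hence the assumption fails and some finite $j_k$ with \eqref{epsilon2} exists.

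I would carry out the steps in this order: (1) set up the contradiction hypothesis and invoke Lemma \ref{step} to obtain $\alpha_{k,j}\ge\beta\bar\alpha(\mu_k)$; (2) re-derive or cite the descent bound \eqref{bb2} for $\tilde d_{k,j}^T\nabla\tilde f(x_{k,j};\mu_k)$; (3) plug both into \eqref{sufficient} to get a uniform per-iteration decrease bounded away from zero by $\epsilon_2^2(\mu_k)$; (4) confirm the iterates stay in a bounded level set so $f(\cdot;\mu_k)$ is bounded below along them; (5) conclude the telescoped sum diverges, contradiction. The only mildly delicate point — and the one I would be most careful about — is step (4): making sure the level set containing $\{x_{k,j}\}_j$ is genuinely bounded, so that $f(\cdot;\mu_k)$ attains a finite infimum along the sequence. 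This is immediate from strict convexity plus coercivity of $f(\cdot;\mu_k)$ (already used to define $\Omega(\mu_k)$ in \eqref{omega}), so there is no real obstacle, only bookkeeping; everything else is a direct assembly of Lemma \ref{step}, \eqref{bb2}, and \eqref{sufficient}.
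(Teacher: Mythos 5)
Your proof is correct and follows essentially the same route as the paper's: argue by contradiction, invoke Lemma \ref{step} for the uniform step-length lower bound $\alpha_{k,j}\ge\beta\bar\alpha(\mu_k)$, and combine it with the sufficient decrease condition \eqref{sufficient} and the descent estimate to contradict the lower boundedness of $f(\cdot;\mu_k)$. The only cosmetic difference is the finishing move: the paper telescopes the decreases to conclude $\|\tilde d_{k,j}\|\to 0$ and then passes to the limit in the CG residual bound \eqref{direction} to force $\|\nabla\tilde f(x_{k,j};\mu_k)\|\to 0$, whereas you lower-bound each per-iteration decrease by a fixed positive constant proportional to $\epsilon_2^2(\mu_k)$ via \eqref{bb2}, driving $f$ to $-\infty$ directly; both are valid, and yours has the minor bonus of yielding an explicit bound on the number of inner iterations.
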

\begin{proof}
  We prove Lemma \ref{lemma-termination} by contradiction, i.e., suppose \begin{equation}\label{contradiction}\left\|\nabla\tilde f(x_{k,j};\mu_k)\right\|> {\epsilon_2(\mu_k)},~\forall~j=1,2,\ldots.\end{equation}
  Since $f(x,\mu_k)$ is lower bounded (by zero), it follows that $$+\infty>\sum_{j=0}^{+\infty}\left(f(x_{k,j};\mu_k)-f(x_{k,j+1};\mu_k)\right)=\sum_{j=0}^{+\infty}\left(f(x_{k,j};\mu_k)-f(x_{k,j}+\alpha_{k,j}\tilde d_{k,j};\mu_k)\right).$$ Moreover, since
\begin{align*}
+\infty>&~\sum_{j=0}^{+\infty}\left(f(x_{k,j};\mu_k)-f(x_{k,j}+\alpha_{k,j}\tilde d_{k,j};\mu_k)\right)\\
\geq&~\sum_{j=0}^{+\infty}\left(-c_1\alpha_{k,j} \tilde d_{k,j}^T\nabla \tilde f(x_{k,j};\mu_k)\right)~(\text{from}~\eqref{sufficient})\\
                                                     \geq &~\sum_{j=0}^{+\infty}\left(c_1 \beta \bar\alpha(\mu_k) \tilde d_{k,j}^T\nabla^2 \tilde f(x_{k,j};\mu_k)\tilde d_{k,j}\right)~(\text{from}~\eqref{descentproperty}~\text{and}~\eqref{lowerbound})\\
                                                     \geq &~{c_1 \beta \bar\alpha(\mu_k)\sigma_{\min}(\mu_k)}\sum_{j=0}^{+\infty}\|\tilde d_{k,j}\|^2,
\end{align*}it follows that
$\displaystyle \lim_{j\rightarrow+\infty}\|\tilde d_{k,j}\|=0.$ Taking limits from both sides of \eqref{direction}, we obtain 
\begin{equation*}\label{limitgradient}\lim_{j\rightarrow+\infty}\left\|\nabla\tilde f(x_{k,j};\mu_k)\right\|= 0,\end{equation*} which contradicts \eqref{contradiction}. Hence, Lemma \ref{lemma-termination} is true.
\end{proof}
Now, we are ready to present the global convergence result of Algorithm \ref{alg2}.

\begin{theorem}\label{thm-convergence} Let $\epsilon_1=0,$ $\displaystyle \lim_{k\rightarrow+\infty}\epsilon_2(\mu_k)= 0,$ and $\epsilon_3(\mu_k)$ satisfies \eqref{epsilon3} for all $k$ in Algorithm \ref{alg2}. Suppose that $\left\{x_{k,j_k}\right\}$ be the sequences generated by Algorithm \ref{alg2} satisfying \eqref{epsilon2} and $x^*$ be the unique solution to problem \eqref{nonsmooth}. Then $$\lim_{k\rightarrow \infty} x_{k,j_k}=x^*.$$
\end{theorem}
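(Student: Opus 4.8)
The plan is to follow the template used for Theorem~\ref{convergenceZhou}: first show that the generated sequence $\{x_{k,j_k}\}$ is bounded, then show that the \emph{true} gradient $\nabla f(x_{k,j_k};\mu_k)$ tends to zero, and finally identify every accumulation point of $\{x_{k,j_k}\}$ with $x^*$ by combining the convexity of $f(\cdot;\mu)$ with the uniform sandwich bound in part~(ii) of Lemma~\ref{yinli}. The inexactness enters only through the error estimate \eqref{gerror} of Theorem~\ref{errorth}, which is already available.

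For boundedness I would argue as follows. Within the $k$-th outer iteration the line search \eqref{sufficient} together with the descent property \eqref{descentproperty} forces $f(\cdot;\mu_k)$ to be strictly decreasing along $\{x_{k,j}\}_j$, so $f(x_{k,j_k};\mu_k)\le f(x_{k,0};\mu_k)=f(x_{k-1,j_{k-1}};\mu_k)$; since $\mu_k<\mu_{k-1}$, part~(i) of Lemma~\ref{yinli} gives $f(x_{k-1,j_{k-1}};\mu_k)<f(x_{k-1,j_{k-1}};\mu_{k-1})$. Iterating this down to $k=0$ (where the base case uses inner-loop descent at $\mu_0$) yields $f(x_{k,j_k};\mu_k)\le f(x_{0,0};\mu_0)$, and then part~(ii) of Lemma~\ref{yinli} gives $f(x_{k,j_k})<f(x_{k,j_k};\mu_k)\le f(x_{0,0};\mu_0)$. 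Because $f$ in \eqref{nonsmooth} is coercive, $\{x_{k,j_k}\}$ lies in a bounded set. Next, combining the termination rule $\|\nabla\tilde f(x_{k,j_k};\mu_k)\|\le\epsilon_2(\mu_k)$ with \eqref{gerror} (applicable once $\mu_k\le1$) gives $\|\nabla f(x_{k,j_k};\mu_k)\|\le\epsilon_2(\mu_k)+\mu_k\epsilon_3(\mu_k)/5\le\epsilon_2(\mu_k)+\mu_k/5$, which tends to $0$ since $\epsilon_2(\mu_k)\to0$ and $\mu_k\to0$.

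For the core step, let $\bar x$ be any accumulation point, say $x_{k_l,j_{k_l}}\to\bar x$, and fix an arbitrary $y\in\mathbb{R}^n$. Convexity of $f(\cdot;\mu_{k_l})$ (part~(iii) of Lemma~\ref{yinli}) gives $f(y;\mu_{k_l})\ge f(x_{k_l,j_{k_l}};\mu_{k_l})+\nabla f(x_{k_l,j_{k_l}};\mu_{k_l})^T(y-x_{k_l,j_{k_l}})$, while part~(ii) of Lemma~\ref{yinli} gives $f(y)+\mu_{k_l}(1+\ln m)\ge f(y;\mu_{k_l})$ and $f(x_{k_l,j_{k_l}};\mu_{k_l})>f(x_{k_l,j_{k_l}})$; hence $f(y)+\mu_{k_l}(1+\ln m)\ge f(x_{k_l,j_{k_l}})+\nabla f(x_{k_l,j_{k_l}};\mu_{k_l})^T(y-x_{k_l,j_{k_l}})$. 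Letting $l\to\infty$ and using $\mu_{k_l}\to0$, $\nabla f(x_{k_l,j_{k_l}};\mu_{k_l})\to0$, boundedness of $\{x_{k_l,j_{k_l}}\}$, and continuity of $f$, we obtain $f(y)\ge f(\bar x)$. Since $y$ is arbitrary, $\bar x$ is a global minimizer of $f$, and by the uniqueness of the solution to \eqref{nonsmooth} we get $\bar x=x^*$. As $\{x_{k,j_k}\}$ is bounded and all of its accumulation points coincide with $x^*$, it follows that $x_{k,j_k}\to x^*$.

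I expect the only place requiring care, rather than a genuine obstacle, to be the bookkeeping in the boundedness step: one must correctly chain the inner-loop monotonicity of $f(\cdot;\mu_k)$ with part~(i) of Lemma~\ref{yinli} across the warm-started outer iterations to produce the clean uniform bound $f(x_{k,j_k})<f(x_{0,0};\mu_0)$. Everything else is routine once Theorem~\ref{errorth} and the coercivity/uniqueness of $f$ are in hand; in particular, this subsequence argument never needs a quantitative comparison of $\epsilon_2(\mu_k)$ with the Hessian lower bound $\sigma_{\min}(\mu_k)$, so the hypothesis $\epsilon_2(\mu_k)\to0$ suffices.
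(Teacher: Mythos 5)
Your proof is correct, and its skeleton coincides with the paper's: chain the inner-loop descent \eqref{sufficient} with part (i) of Lemma \ref{yinli} across the warm-started outer iterations to trap $\{x_{k,j_k}\}$ in the bounded level set $\Omega=\{x\,|\,f(x)\leq f(x_{0,0};\mu_0)\}$, then show $\|\nabla f(x_{k,j_k};\mu_k)\|\rightarrow 0$ by combining the stopping rule \eqref{epsilon2} with the gradient error bound of Theorem \ref{errorth}, and finally identify every accumulation point with $x^*$. The one genuine difference is in the last step: the paper stops at $\|\nabla f(x_{k,j_k};\mu_k)\|\leq 2\epsilon_2(\mu_k)$ (using \eqref{c2} and \eqref{mepsilon} to absorb the inexactness into $\epsilon_2(\mu_k)$) and then simply invokes \cite[Lemma 2, Theorem 3]{16} to conclude $\bar x=x^*$, whereas you inline that cited result with a self-contained first-order argument --- the gradient inequality for the convex function $f(\cdot;\mu_{k_l})$ sandwiched between $f$ and $f+\mu(1+\ln m)$ via part (ii) of Lemma \ref{yinli}, followed by passing to the limit. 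Your version makes the proof self-contained at the cost of a few extra lines; your slightly different gradient bound $\epsilon_2(\mu_k)+\mu_k\epsilon_3(\mu_k)/5$ (using $\epsilon_3(\mu_k)\leq 1$ rather than \eqref{epsilon3}) is equally valid since both terms vanish. Your closing observation is also accurate in spirit: the quantitative condition \eqref{epsilon3}--\eqref{c2} is needed to guarantee that the line search and the inner loop terminate (Lemmas \ref{step} and \ref{lemma-termination}), i.e., that the iterates $x_{k,j_k}$ exist at all, but once their existence is granted the convergence argument itself only needs $\epsilon_2(\mu_k)\rightarrow 0$ and $\mu_k\rightarrow 0$.
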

\begin{proof}
Recalling the definition of $\Omega(\mu)$ (cf. \eqref{omega}), it follows from part 2 of Lemma \ref{yinli} that
 $$\Omega(\mu_k)\subset\Omega:=\left\{x\,|\,f(x)\leq f(x_{0,0};\mu_0)\right\},~\forall~k\geq 0.$$ Since $f(x)$ is coercive, we know that $\Omega$ is bounded.
~From part 1 of Lemma \ref{yinli} and \eqref{sufficient}, we have
  $$f(x_{k,j_k};\mu_k)=f(x_{k+1,0};\mu_{k})>f(x_{k+1,0};\mu_{k+1})\geq f(x_{k+1,1};\mu_{k+1})\geq\cdots\geq f(x_{k+1,j_{k+1}};\mu_{k+1}).$$
 Hence, the function values $\{f(x_{k,j_k};\mu_k)\}$ are decreasing, and the sequences $\{x_{k,j_k}\}$ lie in the bounded set $\Omega$.
 Then there must exist an accumulation point for $\{x_{k,j_k}\}$. Let $\bar{x}$ denote an accumulation point such that
$$\bar{x}=\lim_{k \in {\cal {K}}, k\rightarrow\infty}x_{k,j_k}$$
  for some subsequence indexed by $\cal {K}$. Since $\{f(x_{k,j_k};\mu_k)\}$ are decreasing and bounded below (by zero), it follows that $\displaystyle \lim_{k\rightarrow +\infty}f(x_{k,j_k};\mu_k)=f(\bar{x}).$
%

Next, we show that $\|\nabla f(x_{k,j_k};\mu_k)\|\rightarrow 0.$ In fact, it follows from \eqref{c2}, \eqref{mepsilon} and \eqref{epsilon2} that we have
\begin{align}\label{errorbound}\|\nabla f(x_{k,j_k};\mu_k)\|\leq \|\nabla f(x_{k,j_k};\mu_k)-\nabla \tilde f(x_{k,j_k};\mu_k)\|+\|\nabla \tilde f(x_{k,j_k};\mu_k)\| \leq  2\epsilon_2(\mu_k).
                                       \end{align}
Letting $k$ go to infinity, we obtain the desired result $\|\nabla f(x_{k,j_k};\mu_k)\|\rightarrow 0.$ According to \cite[Lemma 2, Theorem 3]{16}, we know $\bar x=x^*.$ This completes the proof of Theorem \ref{thm-convergence}.
\end{proof}

\section{{Numerical Results}}\label{experiment}

In this section, the proposed inexact Newton-CG algorithm (Algorithm \ref{alg2}) was implemented and the
numerical experiments were done on a personal computer with Intel Core i7-4790K CPU (4.00 GHz) and 16GB of memory. We implemented our codes in C language and compared it with the state-of-the-art Algorithm 1 \cite{16} and the classical Newton-CG algorithm.
The test problems are generated randomly. Similar to \cite{16}, we use the following pseudo-random sequences:
$$\psi_0=7,~\psi_{i+1}=\left(445\psi_i+1\right)\!\!\!\!\!\mod4096,~\bar\psi_i=\psi_i/40.96,~i=1,2,\ldots$$
The elements of $r_i$ and $c_i,~i=1,2,\ldots,m,$ are successively set to
$\bar\psi_1,\bar\psi_2,\ldots,$ in the order:
$$r_1,c_1(1),c_1(2),\ldots,c_1(n),r_2,c_2(1),c_2(2),\ldots,c_2(n),\ldots,r_m,c_m(1),c_m(2),\ldots,c_m(n).$$
 Different
scales of the SEB problem are tested and 
the parameters used in Algorithm \ref{alg2} are set to be
$$\epsilon_1=1\text{E}-6,~c_1=1\text{E}-4,~\beta=0.5,~x_{0,0}=0,$$$$\mu_k=\left(0.1\right)^k,~\epsilon_2(\mu_k)=\max\left\{1\text{E}-5,\min\left\{1\text{E}-1;\mu_k/10\right\}\right\},~\epsilon_3(\mu_k)=1\text{E}-2,~k=0,1,\ldots,6.$$

 The simulation results are summarized in Table \ref{table1}, Table \ref{table2}, and Table \ref{table3},
~where $n$ denotes the dimension of the Euclidean space, $m$ denotes the number
of balls, \textbf{Obj Value} denotes the value of the objective
function in \eqref{nonsmooth} at the final iterate, and \textbf{Time} denotes the CPU
time in seconds for solving the corresponding SEB
problem.

\begin{table*}
\caption{{Performance comparison of proposed Algorithm \ref{alg2}, classical Newton-CG algorithm, and Algorithm \ref{alg1} in \cite{16} with different large $m$ and $n=1000/2000.$}} \label{table1} \centering
\liuhao{\begin{tabular}{ccccccc}
\hline {Problem}&  \multicolumn{2}{c}{Proposed Algorithm \ref{alg2}} & \multicolumn{2}{c}{Classical Newton-CG Algorithm}& \multicolumn{2}{c}{Algorithm \ref{alg1} \cite{16}}\\
\cline{1-1}\cline{2-3}\cline{4-5}\cline{6-7}
 $(m,n)$&Time&Obj Value&Time&Obj value&Time&Obj value\\\hline
%

(10000,1000)&  6.01552E+00   &1.0228463348E+03 &   6.00472E+01 & 1.0228463348E+03  &   5.90720E+01 & 1.0228463348E+03 \\
(20000,1000)&  1.10170E+01  &1.0228463347E+03 &  1.16408E+02  &  1.0228463347E+03 &   1.08360E+02 & 1.0228463347E+03 \\

(30000,1000)&  1.92800E+01  &1.0228463347E+03 &   1.70676E+02  &  1.0228463347e+03 &   1.59982E+02 & 1.0228463347E+03 \\
(40000,1000)&  2.35979E+01  &1.0228463346E+03 &   2.49327E+02 &   1.0228463346e+03 &   2.14136E+02 & 1.0228463346E+03\\

(50000,1000)&  3.02345E+01  &1.0228463347E+03 &    2.92019E+02 &   1.0228463347e+03 &   2.88675E+02 & 1.0228463347E+03\\
(100000,1000)&   5.61113E+01  &1.0228463347E+03 &   5.58410E+02  &  1.0228463347e+03  &   5.86357E+02 & 1.0228463347E+03\\

(10000,2000)&  1.52080E+01   &1.3984577651E+03 &   1.63971E+02 & 1.3984577651E+03  &   1.46725E+02 & 1.3984577651E+03 \\

(20000,2000)&  3.04284E+01  &1.3984577651E+03 &  3.35675E+02  &  1.3984577651E+03 &   2.43072E+02 & 1.3984577651E+03 \\

(30000,2000)&  4.53796E+01  &1.3984577649E+03 &   4.41727E+02  &  1.3984577649E+03 &   4.17976E+02 & 1.3984577649E+03 \\
(40000,2000)&  5.89272E+01  &1.3984577650E+03 &   5.84628E+02 &   1.3984577650E+03 &   5.57352E+02 & 1.3984577650E+03\\

(50000,2000)&  7.38869E+01  &1.3984577650E+03 &    7.32414E+02 &   1.3984577650E+03 &   7.18307E+02 & 1.3984577650E+03\\
(100000,2000)&   1.45665E+02  &1.3984577650E+03 &   1.68460E+03  &  1.3984577650E+03  &   1.32608E+03 & 1.3984577650E+03\\

    \hline
\end{tabular}
}
\end{table*}

%
%
%
%
%
%

\begin{table*}
\caption{{Performance comparison of proposed Algorithm \ref{alg2}, Algorithm \ref{alg1} in \cite{16}, and classical Newton-CG algorithm with different large/huge $m$ and $n=100.$}} \label{table2} \centering
\liuhao{\begin{tabular}{ccccccc}
\hline {Problem}&  \multicolumn{2}{c}{Proposed Algorithm \ref{alg2}} & \multicolumn{2}{c}{Classical Newton-CG Algorithm} & \multicolumn{2}{c}{Algorithm \ref{alg1} \cite{16}}\\
\cline{1-1}\cline{2-3}\cline{4-5}\cline{6-7}
 $(m,n)$&Time&Obj Value&Time&Obj value&Time&Obj value\\\hline

(16000,100)&  5.92168E-01   &4.0409180661E+02 &   5.07334E+00 & 4.0409180661E+02  &   7.54851E+00 & 4.0409180661E+02 \\
(32000,100)&  1.27262E+00  &4.0409180660E+02 &  1.09639E+01  &  4.0409180660E+02 &   1.30855E+01 & 4.0409180660E+02 \\

(64000,100)&  2.49998E+00  &4.0409180660E+02 &   2.03190E+01  &  4.0409180660E+02 &   2.87963E+01 & 4.0409180660E+02 \\
(128000,100)&  4.75222E+00  &4.0409180660E+02 &   4.17729E+01 &   4.0409180660E+02 &   5.39767E+01 & 4.0409180660E+02\\

(256000,100)&  1.05534E+01  &4.0409180662E+02 &    9.10787E+01 &   4.0409180662E+02 &   1.12828E+02 & 4.0409180662E+02\\
(512000,100)&   2.14911E+01  &4.0409180662E+02 &   1.76339E+02  &  4.0409180662E+02  &   2.10324E+02 & 4.0409180662E+02\\

(1024000,100)&  4.51870E+01  &4.0409180662E+02 &    3.38687E+02 &   4.0409180662E+02 &   4.28966E+02 & 4.0409180662E+02\\
(2048000,100)&   9.00532E+01  &4.0409180662E+02 &   6.74268E+02  &  4.0409180662E+02  &   9.56397E+02 & 4.0409180662E+02\\

    \hline
\end{tabular}
}
\end{table*}

\begin{table*}
\caption{{Performance comparison of proposed Algorithm \ref{alg2}, Algorithm \ref{alg1} in \cite{16}, and classical Newton-CG algorithm with different large $m$ and different large $n.$}} \label{table3} \centering
\liuhao{\begin{tabular}{ccccccc}
\hline {Problem}&  \multicolumn{2}{c}{Proposed Algorithm \ref{alg2}} & \multicolumn{2}{c}{Classical Newton-CG Algorithm} & \multicolumn{2}{c}{Algorithm \ref{alg1} \cite{16}}\\
\cline{1-1}\cline{2-3}\cline{4-5}\cline{6-7}
 $(m,n)$&Time&Obj Value&Time&Obj value&Time&Obj value\\\hline

(2000,5000)&  1.62869E+01   &2.1340381607E+03 &   1.04678E+02 & 2.1340381607E+03  &   6.79232E+01 & 2.1340381608E+03 \\

(2000,10000)&  4.07598E+01  &2.9778347203E+03 &  2.43027E+02  &  2.9778347203E+03 &   1.41727E+02 & 2.9778347203E+03 \\

(5000,5000)&  3.21717E+01  &2.1377978300E+03 &   3.34972E+02  &  2.1377978300E+03 &   1.60278E+02 & 2.1377978300E+03 \\
(5000,10000)&  1.18895E+02  &2.9814491291E+03 &   9.20224E+02 &   2.9814491291E+03 &   3.69496E+02 & 2.9814491291E+03\\

(8000,7000)&  1.56516E+02  &2.5108384309E+03 &    1.50597E+03 &   2.5108384309E+03 &   4.17278E+02 & 2.5108384309E+03\\

(100000,8000)&   1.54017E+02  &2.6749515342E+03 &   1.24485E+03  &  2.6749515342E+03  &   5.43961E+02 & 2.6749515342E+03\\

(100000,10000)&   2.16453E+02  &2.9814491291E+03 &   1.66959E+03  &  2.9814491291E+03  &   8.13515E+02 & 2.9814491291E+03\\

(200000,10000)&   4.13139E+02  &2.9814491290E+03 &   3.04470E+03  &  2.9814491290E+03  &   1.59488E+03 & 2.9814491290E+03\\

\hline
\end{tabular}
}
\end{table*}

It can be seen from the three tables that  
the proposed Algorithm \ref{alg2} significantly outperforms Algorithm \ref{alg1} in \cite{16} and the classical Newton-CG algorithm in terms of the CPU time to find the same solution. In particular, Algorithm \ref{alg1} and the classical Newton-CG algorithm take $8$ and $10$ times more CPU time than proposed Algorithm \ref{alg2} in average to find the same solution, respectively. The proposed algorithm is able to solve the SEB problem with $m=2048000$ and $n=100$ within about $90$ seconds, while the classical Newton-CG algorithm and Algorithm 1 in \cite{16} need $674$ and $956$ seconds to do so, respectively. The proposed inexact Newton-CG algorithm significantly improves the classical Newton-CG algorithm by computing the gradient and Hessian-vector product in an inexact fashion, which dramatically reduces the CPU time compared to the exact computations.

\begin{figure}
     \includegraphics[width=0.85\textwidth]{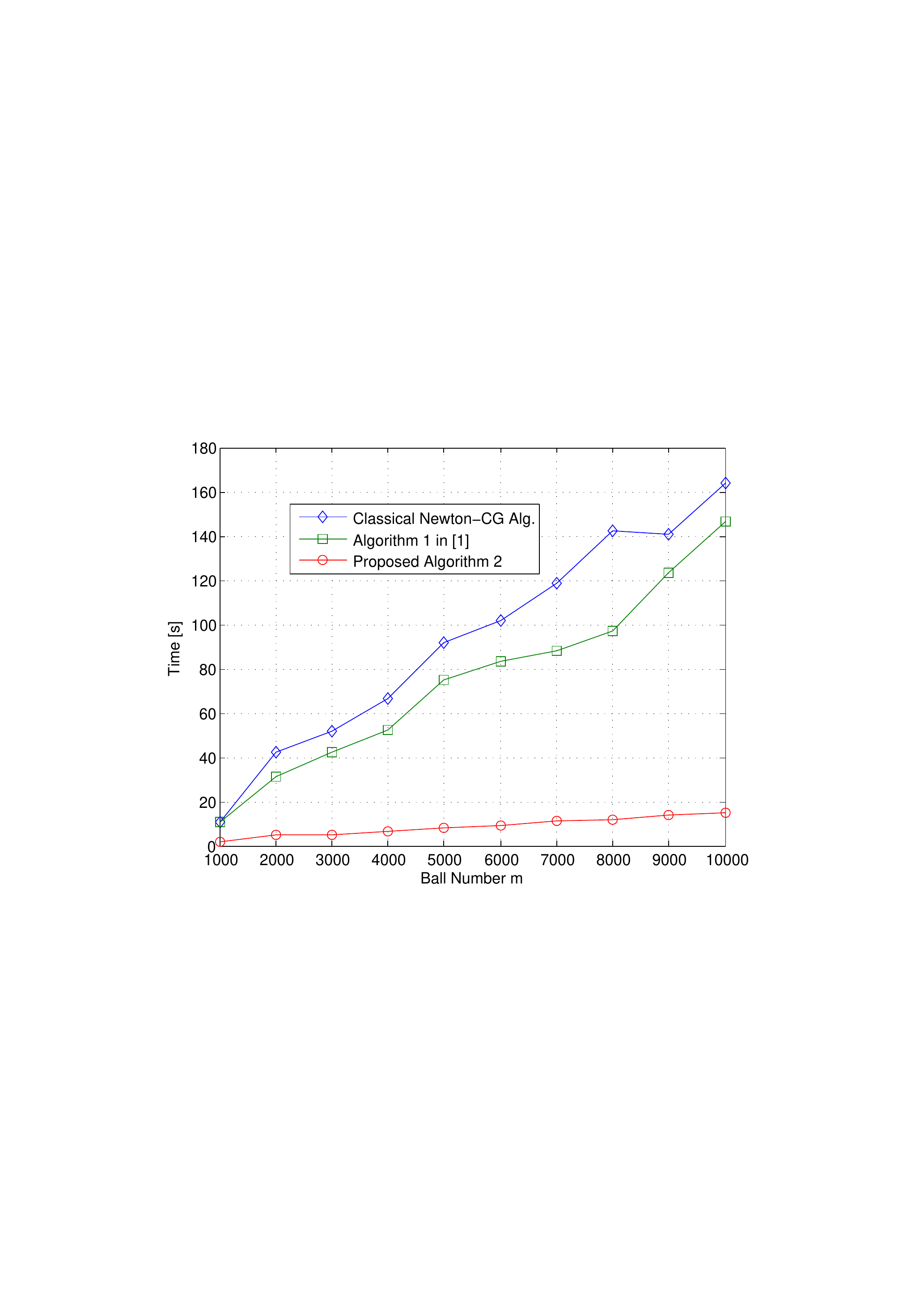}
     \caption{Time comparison of proposed Algorithm \ref{alg2}, Algorithm \ref{alg1} in \cite{16}, and classical Newton-CG algorithm with different large $m$ and fixed $n=2000.$}
     \label{Time}
     \end{figure}

We also plot the CPU time comparison of proposed Algorithm \ref{alg2}, Algorithm \ref{alg1} in \cite{16}, and classical Newton-CG algorithm with different large $m$ and fixed $n=2000$ as Fig. \ref{Time}. It can be observed from Fig. \ref{Time} that for fixed $n=2000$, the CPU time of all of three algorithms grow (approximately) linearly with $m.$ However, the CPU time of both Algorithm \ref{alg1} and the classical Newton-CG algorithm grows much faster than that of proposed Algorithm \ref{alg2}.

In a nutshell, our numerical simulation results show that the proposed inexact Newton-CG algorithm is particularly amenable to solve the SEB problem of large dimensions. First, the gradient and Hessian-vector product are inexactly computed at each iteration of the proposed algorithm by exploiting the (approximate) sparsity structure of the log-exponential aggregation function. This dramatically reduces the computational cost compared to compute the gradient and Hessian-vector product exactly and thus makes the proposed algorithm well suited to solve the SEB problem with large $m.$
Second, 
at each iteration, the proposed algorithm computes the search direction by applying the CG method to solve the inexact Newton equation in an inexact fashion. 
This makes the proposed algorithm also very attractive to solving the SEB problem with large $n.$ 

{\section{Conclusions}\label{conclusion}
In this paper, we developed a computationally efficient inexact Newton-CG algorithm for the SEB problem of large dimensions, which finds wide applications in pattern recognition, machine learning, support vector machines and so on. The key difference between the proposed inexact Newton-CG algorithm and the classical Newton-CG algorithm is that the gradient and the Hessian-vector product are inexactly computed in the proposed algorithm by exploiting the special (approximate) sparsity structure of its log-exponential aggregation function. We proposed an adaptive criterion of inexactly computing the gradient/Hessian and also established global convergence of the proposed algorithm. Simulation results show that the proposed algorithm significantly outperforms the classical Newton-CG algorithm and the state-of-the-art algorithm in \cite{16} in terms of the computational CPU time. Although we focused on the SEB problem in this paper, the proposed algorithm can be applied to solve other min-max problems in \cite{minimax1,minimax2,minimax3,coordinated,simo,ICC}.

\section{Acknowledgments}
The authors wish to thank Professor Ya-xiang Yuan and Professor Yu-Hong Dai of State Key Laboratory of
Scientific and Engineering Computing, Academy of Mathematics and Systems Science, Chinese Academy of Sciences,
for their helpful comments on the paper. The authors also thank Professor Guanglu Zhou of Department of Mathematics and Statistics, Curtin University, for sharing the code of Algorithm \ref{alg1} in \cite{16}.

\end{document}